  \renewcommand{\nomgroup}[1]{%
  \item[\bfseries
  \ifthenelse{\equal{#1}{G}}{Groupoids, deformation and blowup spaces}{%
  \ifthenelse{\equal{#1}{B}}{Fiber bundles}{%
   \ifthenelse{\equal{#1}{H}}{$\mathbf{C^*}$-Algebras}{%
  \ifthenelse{\equal{#1}{I}}{$\mathbf{KK}$-elements}{% 
   \ifstrequal{#1}{O}{Other Symbols}{}}}}}%
    ]}
\newcommand{\titre}{Titre}
\numberwithin{equation}{section}
\theoremstyle{definition}
\newtheorem{definition}{Definition}[section]
\newtheorem*{defmc}{\titre}
\newtheorem{remark}[definition]{Remark}
\newtheorem{remarks}[definition]{Remarks}
\newtheorem{examples}[definition]{Examples}
\newtheorem{question}{Question}
\newtheorem{proposition}[definition]{Proposition}
\newtheorem{proposition-definition}[definition]{Proposition-Definition}
\newtheorem{theorem}[definition]{Theorem}
\newtheorem{corollary}[definition]{Corollary}
\newenvironment{dmc}[1]
     {\renewcommand {\titre} {#1}\begin{defmc}}
     {\end{defmc}}
\newcommand{\cB}{\mathcal{B}}
\newcommand{\cF}{\mathcal{F}}
\newcommand{\cG}{\mathcal{G}}
\newcommand{\cJ}{\mathcal{J}}
\newcommand{\cR}{\mathcal{R}}
\newcommand{\cD}{\mathcal{D}}
\newcommand{\cK}{\mathcal{K}}
\newcommand{\cA}{\mathcal{A}}
\newcommand{\cC}{\mathcal{C}}
\newcommand{\cN}{\mathcal{N}}
\newcommand{\cI}{\mathcal{I}}
\newcommand{\cP}{\mathcal{P}}
\newcommand{\cH}{\mathcal{H}}
\newcommand{\cM}{\mathcal{M}}
\newcommand{\cS}{\mathcal{S}}
\newcommand{\R}{\mathbb{R}}
\newcommand{\bS}{\mathbb{S}}
\newcommand{\N}{\mathbb{N}}
\newcommand{\C}{\mathbb{C}}
\newcommand{\G}{\mathbb{G}}
\newcommand{\Z}{\mathbb{Z}}
\newcommand{\gA}{\mathfrak{A}}
\newcommand{\gN}{\mathfrak{N}}
\newcommand{\gG}{\mathfrak{G}}
\newcommand{\gH}{\mathfrak{H}}
\newcommand{\ronde}{\mathaccent'027}
\newcommand{\ev}{{\rm ev}}
\newcommand{\rra}{\rightrightarrows}
\newcommand{\lra}{\longrightarrow}
\newcommand{\resp}{{\it resp.}\/ }
\newcommand{\ind}{{{\mathrm{ind}}}}
\newcommand{\SBlup}{SBlup}
\newcommand{\DNC}{DNC}
\newcommand{\ie}{{\it i.e.}\/ }
\newcommand{\eg}{{\it e.g.}\/ }
\newcommand{\cf}{{\it cf.}\/ }
\begin{document}

\begin{center}
{\renewcommand{\thefootnote}{\*}
{\Large\bf  Lie groupoids, pseudodifferential calculus and index theory}\footnote{{ The authors were partially supported by ANR-14-CE25-0012-01 (SINGSTAR).}}
\setcounter{footnote}{0}

\bigskip

{\sc by Claire Debord and Georges Skandalis}

}\end{center}

{\footnotesize
\vskip 2pt Universit\'e Paris Diderot, Sorbonne Paris Cit\'e
\vskip-2pt  Sorbonne Universit\'es, UPMC Paris 06, CNRS, IMJ-PRG
\vskip-2pt  UFR de Math\'ematiques, {\sc CP} {\bf 7012} - B\^atiment Sophie Germain 
\vskip-2pt  5 rue Thomas Mann, 75205 Paris CEDEX 13, France

\vskip-2pt cdebord@math.univ-paris-diderot.fr
\vskip-2pt skandalis@math.univ-paris-diderot.fr
}

\vspace{1cm}

 \begin{abstract} 
 Alain Connes introduced the use of Lie groupoids in noncommutative geometry in his pioneering work on the  index theory of foliations. In the present paper, we recall the basic notion involved: groupoids, their $C^*$-algebras, their pseudodifferential calculus... We review several recent and older advances on the involvement of Lie groupoids in noncommutative geometry. We then propose some open questions and possible developments of the subject.

 \end{abstract}

\tableofcontents

\renewcommand\theenumi{\alph{enumi}}
\renewcommand\labelenumi{\rm {\theenumi})}
\renewcommand\theenumii{\roman{enumii}}

\section{Introduction}

Groupoids, and especially smooth ones, appear naturally in various areas of modern mathematics. One can find a recent overview on Lie groupoids, with a historical introduction in \cite{Cartier}.

\bigskip 
Our aim in this paper is to review some advances in the study of Lie groupoids as objects of non commutative geometry. This theory of Lie groupoids is very much linked with various index problems. A main tool for this index theory is the corresponding pseudodifferential calculus. On the other hand, index theory and pseudodifferential calculus is strongly linked with deformation groupoids.

\bigskip 
Groupoids first appeared in the theory of operator algebras in the measurable - von Neumann algebra - setting. They were natural generalizations of actions of groups on spaces.  These crossed product operator algebras go back to the \enquote{group measure space construction} of Murray von Neumann (\cite{MurvNeu}), who used it in order to construct factors of all different types. It often happens that two group actions give rise to isomorphic groupoids (especially in the world of measurable actions - \ie the von Neumann algebra case). It was noticed in \cite{FelMoo1} that the corresponding operator algebras only depend on the corresponding groupoid. A recent survey of this measurable point of view can be found in \cite{Gabo}.

\bigskip
Two almost simultaneous major contributions forced groupoids into the topological noncommutative world \ie $C^*$-algebras:\begin{itemize}
\item The construction by Jean Renault of the $C^*$-algebra of a locally compact groupoid, its representation theory (\cite{Ren})...
\item The construction by Alain Connes of the von Neumann algebra and the $C^*$-algebra of a foliation based on its holonomy groupoid (\cite{ConnesLNM, ConnesSurvey, ConnesNCG}). 
\end{itemize}

Moreover, as a motivation for Connes was the generalization of the Atiyah-Singer index theorem (\cite{AtSing1, AtSing4}), he used the smooth longitudinal structure in order to construct the associated pseudodifferential calculus and the $C^*$-algebraic exact sequence of pseudodifferential operators. This allowed him to construct the analytic index, and to prove a measured index theorem (\cite{ConnesLNM}). Very soon after, he constructed a topological index with values in the $C^*$-algebra of the foliation (\cite{ConnesSurvey}). The corresponding index theorem was proved in \cite{CoSk}.

This pseudodifferential calculus on groupoids and the construction of the analytic index in the groupoid $C^*$-algebra was then easily generalized to all Lie groupoids (\cite{MonthPie, NWX}). They gave rise to several index theorems - \cf \cite{DLN, CRLM, DebLescGroupoids}.

\bigskip Another very nice construction of Connes (see \cite{ConnesNCG}) gave a geometric insight on this generalized index theory: the construction of the \emph{tangent groupoid}. This tangent groupoid allowed to construct the analytic index of (pseudo)differential operators without pseudodifferential calculus. It was also used to give beautiful alternate proofs of the Atiyah-Singer index theorem (\cite{ConnesNCG, DLN}).

\medskip Connes' tangent groupoid was an inspiration for many papers (\cf \cite{HilsSkMorph, MonthPie, NWX}...) where this idea was  generalized to various geometric contexts. Its natural setting is the \emph{deformation to the normal cone} (DNC) construction. Since DNC is functorial, Connes' construction can be extended to any case of a sub-Lie groupoid of a Lie groupoid (see \cite{DS5}). Moreover, this construction is immediately related to the Connes-Higson $E$-theory (\cf \cite{ConHig}). It therefore opened a whole world of deformation groupoids that are useful in many situations and gave rise to many interesting $K$-theoretic constructions and computations.

\medskip One also sees that the $C^*$-algebra extension of the pseudodifferential operators on a groupoid is directly related to the one naturally associated with the DNC construction and the canonical action of $\R_+^*$ on it (\cite{AMMS, DS1}). There is a well defined Morita equivalence between these exact sequences, and the corresponding bimodule gives an alternative definition of the pseudodifferential calculus on a groupoid (\cf \cite{DS1}) - which in turn should be used to various contexts. 

\bigskip In the present survey, we recall definitions and several examples of Lie groupoids and describe their $C^*$-algebras. Next, we study the pseudodifferential operators associated with Lie groupoids from various view points.  Examples of various constructions of groupoids giving rise to interesting $K$-theoretic computations are then outlined. We end with a few remarks and several natural questions concerning groupoids, deformations and applications.

\bigskip

\section{Lie groupoids and their operators algebras}

We refer to \cite{Mack,MM} for the classical definitions and
constructions related to groupoids and their Lie algebroids. The construction of the $C^*$-algebra of a groupoid is due to Jean Renault \cite{Ren}, one can look at his course \cite{RenC} which is mainly devoted to locally compact groupoids.

\subsection{Lie groupoids}
\subsubsection{Generalities}
A \emph{groupoid} is a small
category in which every morphism is an isomorphism.  Thus a 
 groupoid $G$ is a pair $(G^{(0)},G^{(1)})$ of sets together  with
structural morphisms:
\begin{description} \item[Units and arrows.] The set $G^{(0)}$ denotes the set of \emph{objects} (or \emph{units}) of the groupoid, whereas the set $G^{(1)}$ is the set of \emph{morphisms} (or \emph{arrows}). The \emph{unit map} $u: G^{(0)} \to G^{(1)}$  is the  injective map which assigns to any object of $G$ its identity morphism. 
\item[The source and range maps] $s,r: G^{(1)} \to G^{(0)}$ are (surjective) maps equal to identity in restriction to $G^{(0)}$: $s\circ u=r\circ u =Id$.
\item[The inverse] $\iota: G^{(1)} \to G^{(1)}$ is an involutive map which exchanges the source and range:
$$\mbox{for } \alpha\in G,\ ( \alpha^{-1})^{-1}= \alpha \mbox{ and } s(  \alpha^{-1})=r( \alpha), \mbox{ where }  \alpha^{-1} \mbox{ denotes } \ \iota( \alpha)$$
\item[The  partial multiplication] $m: G^{(2)} \rightarrow G^{(1)}$ is defined on the set of \emph{composable pairs} $G^{(2)}=\{( \alpha, \beta) \in G^{(1)} \times G^{(1)}\ \vert \ s( \alpha) = r(\beta)\}$. It satisfies for any $( \alpha,\beta)\in G^{(2)}$: 
$$r( \alpha \beta)=r( \alpha),\ s( \alpha \beta)=s(\beta), \   \alpha u(s( \alpha))=u(r( \alpha)) \alpha= \alpha, \ \alpha^{-1} \alpha=u(s( \alpha))  $$
where $\alpha\beta$   stands for  $m(\alpha,\beta)$. Moreover the product is associative, if $\alpha,\beta,\gamma \in G$:
$$(\alpha \beta)\gamma=\alpha(\beta \gamma) \mbox{ when }\ s(\alpha)=r(\beta) \mbox{ and } s(\beta)=r(\gamma)$$

\end{description}
 
We often identify $G^{(0)}$ with its image in $G^{(1)}$ and  make the confusion between $G$ and $G^{(1)}$. A groupoid $G=(G^{(0)},G^{(1)},s,r,u,\iota,m)$ will be simply denoted $G\overset{r,s}{\rightrightarrows} G^{(0)}$ or just $G\rightrightarrows  G^{(0)}$.

\bigskip {\bf Notation.} 
For any maps $f:A\to G^{(0)}$ and $g:B\to G^{(0)}$, define $$G^f=\{(x,\gamma)\in A\times G;\ r(\gamma)=f(x)\} , \ G_g=\{(\gamma,x)\in G \times B;\ s(\gamma)=g(x)\}$$ and $$G^f_g=\{(x,\gamma,y)\in A\times G\times B;\ r(\gamma)=f(x),\ s(\gamma)=g(y)\}\ .$$  In particular for $A,B\subset G^{(0)}$, we put $G^A=\{\gamma\in G;\ r(\gamma)\in A\}$ and $G_A=\{\gamma\in G;\ s(\gamma)\in A\}$; we also put $G_A^B=G_A\cap G^B$.

\begin{remark} For any $x\in G^{(0)}$, $G_x^x$ is a group with unit $x$, called the \emph{isotropy group} at $x$. It acts by left (\resp right) multiplication on $G^x$ (resp. $G_x$) and the quotient identifies with $s(G^x)=r(G_x)\subset G^{(0)}$ which is called the \emph{orbit} of $G$ passing through $x$.
Thus a groupoid acts on its set of units. 

Note that $A$ is a \emph{saturated} subset of $G^{(0)}$ (for the action of $G$) if and only if $G_A=G^A=G_A^A$.
\end{remark}

In order to construct the $C^*$-algebra of a groupoid, we will assume that it is \emph{locally compact.}  This means that $G^{(0)}$  and $G$ are endowed with topologies for which \begin{itemize}
\item $G^{(0)}$ is a locally compact  Hausdorff  space, 
\item $G$ is second countable, and locally compact locally Hausdorff \ie each point $\gamma$ in $G$ has a compact (Hausdorff) neighborhood; 
\item all structural maps ($s,r,u,\iota, m$) are continuous and $s$ is open. 
\end{itemize}
In this situation the map $r$ is open and the $s$-fibers of $G$ are Hausdorff.

\smallskip \noindent In order to study differential operators and index theories, we will assume our groupoid to be smooth. The groupoid $G\rra G^{(0)}$ is \emph{Lie} or \emph{smooth} when
$G$ and $G^{(0)}$ are second countable smooth manifolds with $G^{(0)}$ Hausdorff, $s$ is a smooth
submersion (hence $G^{(2)}$ is a  manifold) and the
structural  morphisms are smooth.

The Lie groupoid $G$ is said to be {\it $s$-connected} when for any $x\in G^{(0)}$, the $s$-fiber of $G$ over $x$ is connected. The $s$-connected component of a groupoid $G$ is $\cup_{x\in G^{(0)}} CG_x$  where $CG_x$ is the connected component of the $s$-fiber $G_x$ which contains the unit $u(x)$. The groupoid $CG$ is the smallest open subgroupoid of $G$ containing its units.

\medskip \begin{examples}\label{examples2.2}
\begin{enumerate} \item A space $M$ is a groupoid over itself with $s=r=u=\mbox{Id}$. Thus, a manifold is a Lie groupoid.

\item A group $H\rightrightarrows \{e\}$ is a groupoid over its unit
  $e$, with the usual product and inverse map. A Lie group is a Lie groupoid!

\item A group bundle: $\pi: E\rightarrow M$ is a groupoid
  $E\rightrightarrows M$ with $r=s=\pi$ and algebraic operations
  given by the  group structure of each fiber $E_x$, $x\in M$. In particular, a smooth vector bundle over a manifold gives thus rise to a Lie groupoid.

\item If $\cR$ is an equivalence relation on a space $M$, then the
  graph of $\cR$, $G_{\cR}:=\{(x,y)\in M\times M \ \vert \ x\cR y\}$, 
  admits a structure of groupoid over $M$,  which is given by:
$$ u(x)=(x,x)\ , \ s(x,y)=y\ ,\ r(x,y)=x \ , \\
  (x,y)^{-1}=(y,x) \ ,\ (x,y)\cdot(y,z)=(x,z) \ $$ for
$x,\ y,\ z$ in $M$. Notice that the orbits of the groupoid $G_{\cR}$ are precisely the orbits of the equivalence relation $\cR$.
If $M$ is a manifold, $G_{\cR}$ is a smooth submanifold of $M\times M$ \emph{and} $s$ restricts to a submersion, it is a Lie groupoid.

\smallskip \noindent When $x\cR y$ for any $x,\ y$ in $M$,
$G_{\cR}=M\times M \rightrightarrows M$ is called the {\it pair
  groupoid}. If $M$ is a manifold, the pair groupoid  $M\times M$ is a Lie groupoid. 
  
\smallskip Without entering too much in the details, let us say that a smooth regular foliation on a manifold $M$(of dimension $n$)  is an equivalence relation on $M$ whose orbits, called the leaves, are immersed connected submanifolds of $M$ (of dimension $p$). The corresponding groupoid $G_{\cR}$ does not have a smooth structure, but there is a \enquote{smallest} Lie groupoid of dimension $n+p$, called the \emph{holonomy groupoid of the foliation}, whose orbits are the leaves \cite{Win,Pra84,Hae,MM}. The holonomy appears to be exactly the obstruction for $G_{\cR}$ to be smooth!

\item If $H$ is a group acting on a space $M$, the {\it groupoid of
  the action} is $H\ltimes M \rightrightarrows M$ with the following
structural morphisms 
$$\begin{array}{cc} u(x)=(e,x)\ , \ s(g,x)=x\ ,\ r(g,x)=g\cdot x \ , \\
  (g,x)^{-1}=(g^{-1},g\cdot x) \ ,\ (h,g\cdot x)\cdot(g,x)=(hg,x) \
  ,\end{array}$$ for 
$x$ in $M$ and $g,\ h$ in $H$. Once again, the notion of isotropy groups, and orbits of the groupoid coincide with the one of the action. 

If $H$ is a Lie group, $M$ is a smooth manifold and the action is smooth, then $H\ltimes M$ is a Lie groupoid.

\item \label{examples2.2.f}Let $M$ be a smooth manifold of dimension $n$. The \emph{Poincaré groupoid} of 
$M$ is $$\Pi(M):=\{\bar{\gamma} \ \vert \ \gamma:[0,1] \rightarrow M \mbox{ a
  continuous path} \} \rightrightarrows M $$ where $\bar{\gamma}$ denotes
the homotopy class (with fixed endpoints) of $\gamma$.  For $x\in M$, $u(x)$ will be the (class of the ) constant path at $x$, $s(\bar{\gamma})=\gamma(0)$, $r(\bar{\gamma})=\gamma(1)$, the product comes from the concatenation product of paths....

The groupoid $\Pi(M)$ is naturally endowed with a
smooth structure (of dimension $2n$). For any $x\in M$, the isotropy group $\Pi(M)_x^x$ is the fondamental group of $M$ with base point $x$ and $\Pi(M)_x$ the corresponding universal covering.

\item If $G\rightrightarrows M$ is a groupoid and $f:N\rightarrow M$ a map, $G_f^f \rightrightarrows N$ is again a groupoid: 
$$ u(x)=(x,f(x),x),\ s(x,\alpha,y)=y,\ (x,\alpha,y)^{-1}=(y,\alpha^{-1},x),\ (x,\alpha,y)(y,\beta,z)=(x,\alpha\beta,z)$$
where $\alpha$, $\beta$  are in $G$, $x,\ y,\ z$ in $N$ and  $f(x)=r(\alpha)$, $f(y)=s(\alpha)=r(\beta)$ and $f(z)=s(\beta)$.

\smallskip When $G$ is a Lie groupoid, and $f$ a smooth map transverse to $G$ (see Definition \ref{transverse}), $G_f^f\rightrightarrows N$ is a Lie groupoid.

\end{enumerate}
\end{examples}

\medskip The infinitesimal object associated to a Lie groupoid is its \emph{Lie
algebroid}:

\begin{definition}\label{algebroid}
A  \emph{Lie algebroid} $\cA$ over a manifold $M$ is a vector bundle
$\cA \to M$, together with a Lie algebra structure on the space
$\Gamma(\cA)$ of smooth sections of $\cA$ and  a bundle map $\varrho:
\cA \rightarrow TM$, called the \emph{anchor}, whose extension to sections of these bundles
satisfies

(i) $\varrho([X,Y])=[\varrho(X),\varrho(Y)]$, and

(ii) $[X, fY] = f[X,Y] + (\varrho(X) f)Y$,

\noindent for any smooth sections $X$ and $Y$ of $\cA$ and any
smooth function $f$ on $M$.
\end{definition}

\smallskip 

Now, let $G  \overset{s,r}{\rightrightarrows}G^{(0)}$ be a Lie
groupoid. 

\smallskip  \noindent
For any $\alpha$ in $G$, let $R_{\alpha}: G_{r(\alpha)} \rightarrow
G_{s(\alpha)}$ be the right multiplication by $\alpha$. A tangent vector
field $Z$ on $G$ is \emph{right invariant} if it  satisfies,

\begin{itemize}
\item[--] $Z$ is $s$-vertical, namely $Ts(Z)=0$, \ie for every $\alpha \in G$, $Z(\alpha)$ is tangent to the fiber $G_{s(\alpha)}$.
\item[--] For all $(\alpha,\beta)$ in $G^{(2)}$, $Z(\alpha  \beta)=TR_{\beta}(Z(\alpha))$.
\end{itemize}

\medskip \noindent The Lie algebroid ${\gA}  G$ of the Lie groupoid $G$ is defined as follows (see \cite{Mack}).
\begin{itemize}
\item[--] The fibre bundle ${\gA}  G \rightarrow G^{(0)}$ is the restriction of the kernel of the differential $Ts$ of $s$
to $\cG^{(0)}$. In other words, $\gA G=\cup_{x\in G^{(0)}} T_x
G_x$ is the union of the tangent spaces to the $s$-fibres at the corresponding unit.
\item[--] The anchor $\varrho:{\gA}  G \rightarrow T\cG^{(0)}$
is the restriction of the differential $Tr$ of $r$
to ${\gA}  G$.
\item[--] If $Y:U \rightarrow {\gA}  G$ is a local section of ${\gA}  G$,
  where $U$ is an open subset of $G^{(0)}$, we define the local \emph{
    right invariant vector field} $Z_Y$ \emph{associated} with $Y$ by
  $$Z_Y(\alpha)=TR_{\alpha}(Y(r(\alpha))) \ \makebox{ for all } \
  \alpha \in G^U \ .$$

\noindent The Lie bracket is then defined by
$$\begin{array}{cccc} [\ ,\ ]: & \Gamma({\gA}  G)\times \Gamma({\gA} 
  G) & \longrightarrow & \Gamma({\gA}  G) \\
 & (Y_1,Y_2) & \mapsto & [Z_{Y_1},Z_{Y_2}]_{G^{(0)}}
\end{array}$$
where $[Z_{Y_1},Z_{Y_2}]_{G^{(0)}}$  is the
restriction of the usual bracket $[Z_{Y_1},Z_{Y_2}]$ to ${G^{(0)}}$.
\end{itemize}

Notice that $\gA G$ identifies with the normal bundle $\cN_{G^{(0)}}^G$ of the inclusion $u:G^{(0)}\hookrightarrow G$.

\begin{definition}\label{transverse} A smooth map $f:N\to M$ is \emph{transverse} to a Lie groupoid $G\rightrightarrows M$ when for all $x\in N$: 
$Tf(T_xN)+\varrho(\gA G)_{f(x)}=T_{f(x)}M$.
\end{definition}

\medskip \noindent Lie theory for groupoids is much trickier than for groups. In particular, a Lie algebroid does not always integrate into a Lie groupoid (see \cite{AlMo} for a counterexample). 

Nevertheless, when the anchor of a Lie algebroid $\cA$ is injective in restriction to a dense open subset it is integrable and there is a \enquote{smallest} $s$-connected Lie groupoid integrating it (\cf \cite{Debord}). This situation is often encountered in index theory where such a Lie algebroid of vector fields is naturally associated with the studied geometrical object (\eg manifolds with corners, conical singularities...). See \cite{CraFer} for a complete answer to this integrability problem.

\medskip \noindent\begin{examples}\label{examples2.5}
\begin{enumerate} \item The Lie algebroid of a Lie group is the Lie algebra of the group.
\item The Lie algebroid of the pair groupoid $M\times M \rightrightarrows M$ on a smooth manifold $M$, is $TM$ with identity as anchor.

\item If $f:M\rightarrow B$ is a smooth submersion, the Lie groupoid of the equivalence relation on $M$ \enquote{being on the same fiber of $f$}  is $B_f^f=M\times_f M \rightrightarrows M$ and its Lie algebroid is the kernel of $Tf$ with anchor the inclusion.\label{examples2.5.b}

\item More generally, if $\cF$ is a regular foliation on a manifold $M$, $T\cF$ with inclusion as anchor, defines a Lie algebroid over $M$ and the \emph{holonomy groupoid} is the smallest Lie groupoid which integrates $T\cF$ \cite{Win,Pra84}.\label{examples2.5.c}

\item Let $M$ be a manifold and $V$ a hypersurface cutting $M$ into two pieces. The module of smooth vector fields on $M$ that are tangent to $V$ was considered by Melrose for the study of \emph{$b$-operators} for manifold with boundary \cite{Mel2}. This module is the module of sections of a Lie algebroid $\gA_b$ over $M$ which integrates into the $b$-groupoid $G_b$ \cite{Month2}.\label{examples2.5.d}

If $M=V\times \R$ and $V=V\times \{0\}$  (which is always locally the case around $V$ up to a diffeomorphism), then $\gA_b=TV\times T\R$ with anchor $\varrho(x,U,t,\xi)\mapsto (x,U,t, t\xi)$ and $G_b=(V\times V) \times (\R\rtimes \R_+^*)$ is the product of the pair groupoid on $V$ with the groupoid of the multiplicative action of $\R_+^*$ on $\R$.
\end{enumerate}
\end{examples}

\begin{remark} The unit spaces of many interesting groupoids have boundaries or corners. In (almost) all the situations, these groupoids sit naturally inside Lie groupoids \emph{without boundaries} as restrictions to closed \emph{saturated} subsets. This means that the object under study is a subgroupoid $G_V^V=G_V$ of a Lie groupoid $G{\rightrightarrows} G^{(0)}$ where $V$ is a closed saturated subset of $G^{(0)}$. Such groupoids, have a natural algebroid, adiabatic deformation, pseudodifferential calculus, \emph{etc.} that are restrictions to $V$ and $G_V$ of the corresponding objects on $G^{(0)}$ and $G$. We chose to give definitions and constructions for Lie groupoids; the case of a longitudinally smooth groupoid over a manifold with corners is a straightforward generalization using a convenient restriction.
\end{remark}

\subsubsection{Morita equivalence of Lie groupoids}

An important feature of noncommutativity is a nontrivial notion of Morita equivalence. Indeed a Morita equivalence of commutative algebras is just an isomorphism.

In the same way, in the world of groupoids there is an interesting notion of Morita equivalence, although this notion reduces to isomorphism both for spaces and for groups.

\begin{definition}

Two Lie groupoids $G_1\overset{r,s}{\rightrightarrows} M_1$ and $G_2\overset{r,s}{\rightrightarrows} M_2$ are \emph{Morita equivalent} if there exists a groupoid $G\overset{r,s}{\rightrightarrows} M$ and smooth maps $f_i:M_i\to M$ transverse  to $G$ such that $f_i(M_i)$ meets all the orbits of $G$ and such that the pull back groupoids $G_{f_i}^{f_i}$ identify to $G_i$.
\end{definition}

More precisely, a Morita equivalence is given by a linking manifold $X$ with extra data: surjective smooth submersions $r:X\to M_1$ and $s:X\to M_2$ and compositions $G_1\times_{s,r}X\to X$,  $X\times_{s,r} G_2\to X$, $X\times_{r,r} X\to G_2$ and $X\times_{s,s} X\to G_1$ with natural associativity conditions (see \cite{MRW} for details). In the above situation, $X$ is the manifold $G_{f_2}^{f_1}$ and the extra data are the range and source maps and the composition rules of the groupoid $G_{f_1\sqcup f_2}^{f_1\sqcup f_2}\rightrightarrows M_1\sqcup M_2$ (see \cite{MRW}).

\bigskip\begin{examples}There are many interesting Morita equivalences of Lie groupoids. 

\begin{enumerate}
\item Given a surjective submersion $f:M\to B$, the subgroupoid $G=\{(x,y)\in M\times M;\ f(x)=f(y)\}$ of the pair groupoid $M\times M$ is Morita equivalent to the space $B$, \ie the groupoid $B\rra B$ -- \cf example \ref{examples2.5}.\ref{examples2.5.b}).

\item More generally, if $G\rra B$ is a Lie groupoid and $f:M\to B$ is a smooth map transverse to $G$ whose image meets all the $G$ orbits, then the groupoid $G_f^f$ is a Lie groupoid Morita equivalent to $G$ -- \cf example \ref{examples2.5}.\ref{examples2.5.c}).

\item If $M$ is a connected manifold, its Poincar\'e groupoid $\Pi(M)$ (\cf example \ref{examples2.2}.\ref{examples2.2.f}) is Morita equivalent to the fundamental group $\pi_1(M)$. 
\end{enumerate}
\end{examples}

\subsection{$C^*$-algebra of a Lie groupoid}

\subsubsection{Convolution $*$-algebra of smooth functions with compact support}

Recall that on a Lie group $H$, the \emph{convolution product formula} is given for $f$ and $g$ in $C^{\infty}_c(H)$ (\ie smooth functions with compact supports on $H$) by $$f\ast g (x)=\int_H f(y)g(y^{-1}x) dy$$
For $M$ a manifold, the \emph{convolution product formula of kernels} is given for $f$ and $g$ in $C^{\infty}_c(M\times M)$ by $$f\ast g (x,y)=\int_M f(x,z)g(z,y) dz$$
Both these convolution products are special cases of convolution product on a Lie groupoid, the first one is for the Lie group viewed as a Lie groupoid $H\rightrightarrows \{e\}$ and the second one corresponds to the pair groupoid $M\times M\rightrightarrows M$.

\medskip Let us assume now that $G\rightrightarrows G^{(0)}$ is a Lie groupoid with source $s$ and range $r$.
We define a \emph{convolution algebra} structure on $C^\infty_c(G)$ in the following way:
\begin{description}
\item[Convolution product.] For $f,\ g\in C^\infty_c(G)$ and $\gamma \in G$ one wants to define a convolution formula of the following form
$$f\ast g(\gamma)=\int_{(\alpha,\beta)\in G^{(2)};\ \alpha \beta=\gamma}f(\alpha)g(\beta)$$
In order to define the previous integral, one can choose a smooth \emph{Haar system} on $G$ that is 
\begin{itemize} \item  a smooth family of Lebesgue measure $\nu^x$ on every $G^x$, $x\in G^{(0)}$, 
\item with left invariance property: for every $\alpha \in G$, the diffeomorphism $\beta \mapsto \alpha \cdot \beta $ from $G^{s(\alpha)}$ with $G^{r(\alpha)}$ sends the measure $\nu^{s(\alpha)}$ to the measure $ \nu^{r(\alpha)}$.
\end{itemize}
Now the convolution formula becomes: 
$$f\ast g(\gamma)=\int_{G^{r(\gamma)}}f(\alpha)g(\alpha^{-1}\gamma)\, d\nu^{r(\gamma)}(\alpha). $$

The convolution product is associative by invariance of the Haar system (and Fubini).
\item[Adjoint.] For $f\in C_c^\infty(G)$, its adjoint is the function $f^*:\alpha \mapsto \overline{f(\alpha^{-1})}$.
\end{description}

\begin{remarks} Let us mention two very useful constructions that appeared in  \cite{ConnesSurvey}. \begin{enumerate}
\item  In order to have intrinsic formulas for the convolution and adjoint, it is suitable to replace the space $C_c^\infty(G)$ by the space of sections of a bundle of (half) densities on the groupoid, more precisely sections of the vector bundle $\Omega^{1/2}=|\Lambda|^{1/2} (\ker Ts\times \ker Tr)$ of half densities of the bundle $\ker Ts\times \ker Tr$. Note that the Haar system is just an invariant section of the bundle $|\Lambda|^{1} (\ker Tr)$ of $1$-densities of the bundle $\ker (\ker Tr)$ and can be used to trivialize the bundle $|\Lambda|^{1/2} (\ker Ts\times \ker Tr)$. 
\item Also in \cite{ConnesSurvey} Connes explains how to naturally define the convolution algebra ``$C_c^\infty(G)$'' when the groupoid $G$ is not assumed to be Hausdorff (but is still a \emph{locally} Hausdorff manifold).
\end{enumerate}

\end{remarks}

\subsubsection{Norm and $C^*$-algebra}

For $f\in C_c^{\infty}(G)$, define 
$$ \|f\|_{I}=\max\big( \underset{x\in G^{(0)}}{\mbox{sup}} \int_{G^x} \vert f(\gamma) \vert d\nu^{r(\gamma)}(\alpha) \ ; \  \underset{x\in G^{(0)}}{\mbox{sup}} \int_{G^x} \vert f(\gamma^{-1}) \vert d\nu^{r(\gamma)}(\alpha) \big)$$

A $*$-representation $\pi : C_c^{\infty}(G) \longrightarrow \cB(\cH)$, where $\cB(\cH)$ is the algebra of bounded operator on the separable Hilbert space $\cH$, is \emph{bounded } when it satisfies:  $\| \pi(f) \| \leq \| f \|_I$ for any $f\in C_c^{\infty}(G)$.

We define the \emph{maximal norm} of $f\in C_c^{\infty}(G)$ by:
$$\|f\|_{max} = \underset{\pi \ bounded}{\mbox{sup}} \| \pi(f) \|_{\cB(\cH)}$$

For any $x\in G^{(0)}$ the map $\pi^x : C_c^{\infty}(G) \rightarrow \cB(L^2(G_x))$ defined by the formula: 
$$\pi^x(f)(\tau)(\gamma)=\int_{G^{r(\gamma)}} f(\alpha)\tau(\alpha^{-1}\gamma)\, d\nu^{r(\gamma)}(\alpha). $$
where  $f\in C_c^{\infty}(G)$, $\tau \in L^2(G_x)$ and $\gamma \in G$, is a bounded representation.

\medskip We define the \emph{minimal norm} of $f\in C_c^{\infty}(G)$ by:
$$\|f\|_{min} = \underset{x\in G^{(0)}}{\mbox{sup}} \| \pi^x(f) \|_{\cB(L^2(G_x))}$$

The \emph{reduced $C^*$-algebra} of $G$ is the completion of $C_c^{\infty}(G)$ with respect to the minimal norm: $C_r^*(G)=\overline{C_c^{\infty}(G)}^{min}$. The \emph{maximal $C^*$-algebra} of $G$ is the completion of $C_c^{\infty}(G)$ with respect to the maximal norm: $C^*(G)=\overline{C_c^{\infty}(G)}^{max}$.

\smallskip The identity induces a surjective morphism from $C^*(G)$ to $C_r^*(G)$.  This morphism is an isomorphism when the groupoid $G$ is \emph{amenable} (see \cite{AnaRen} for a discussion of the amenability of locally compact groupoids).

\bigskip The $C^*$-completions $C^*(G)$ and $C^*_r(G)$ have both advantages and disadvantages. Some properties hold for one of them and not necessarily for the other one. The celebrated Baum-Connes conjecture \cite{BaCo, BaCoHi} is a statement for the $K$-theory of the reduced one -- and Kaszdan's property $T$ shows easily that it cannot hold for the the maximal one (see \cite{Valette} for a very nice discussion on the Baum-Connes conjecture). 

On the other hand, let $G\rra M$ be a Lie groupoid and $X\subset M$ a closed subset of $M$  \emph{saturated} for $G$ (\ie if for $\alpha \in G$, $s(\alpha)\in X$ if and only if $r(\alpha)\in X$). Then we have an exact sequence: $$0\to C^*(G_{M\setminus X})\to C^*(G) \to C^*(G_X)\to 0$$
of maximal $C^*$-algebras. The corresponding sequence at the level of reduced $C^*$-algebras is \emph{not} always exact in its middle term. This nonexactness is responsible for counterexamples to the Baum-Connes conjecture in \cite{HLS}. See \cite{BGW, BEW1, BEW2} for a possible solution to this lack of exactness.

\subsection{Deformation to the normal cone and blowup groupoids}

\subsubsection{Deformation to the normal cone groupoid}

Let us first recall the standard \emph{deformation to the normal cone} construction.  Let $V\subset M$ be a submanifold of a manifold $M$ and denote  $\cN_V^M$ the normal bundle. The \emph{deformation to the normal cone} of $V$ in $M$ is:
$$DNC(M,V)=(M\times \R^*)\sqcup (\cN_V^M\times \{0\}).$$

It is equipped with the natural smooth structure generated by the following constraints:
\begin{itemize}
\item the map $\varphi: DNC(M,V)\to M\times \R$ given by $(x,t)\in M\times \R^* \mapsto (x,t)$ and $(x,\xi,0)\in \cN_V^M\times \{0\} \mapsto (x,0)$ is smooth.
\item if $f:M\to \R$ is any smooth function that vanishes on $V$, the function $f^{dnc}:DNC(M,V)\to  \R$ given by $(x,t)\in M\times \R^*\mapsto \frac{f(x)}{t}$ and $(x,\xi,0)\in \cN_V^M\times \{0\} \mapsto df(\xi)$ is smooth.
\end{itemize}

One can also define the smooth structure with the choice of an exponential map  
\(\theta : U'\subset  \cN_V^M \rightarrow U\subset M\),
by requiring the map $$\Theta : (x,\xi,t) \mapsto \left\{ \begin{array}{l} (\theta(x,t\xi),t) \ for \ t\not=0 \\ (x,\xi,0)\ for \ t=0 \end{array} \right. $$
to be a diffeomorphism from the open neighbourhood \(W'=\{(x,\xi,t)\in \cN_V^M\times \R \ \vert \ (x,t\xi)\in U'\}\)  of  \(\cN_V^M\times \{0\}\) in \(\cN_V^M\times \R\) on its image. 

\medskip Note that describing the smooth structure thanks to the choice of an exponential map ensures that such a structure exists, while the description thanks to the characterisation of the smooth functions ensure that this structure is independent of choices. 

\begin{remark}
By the characterisation of the smooth functions, it follows that the deformation to the normal cone construction is functorial.
\end{remark}

A consequence of this remark is 
\begin{corollary}
 If $G$ is a Lie groupoid and $H$ is a subgroupoid, $DNC(G,H) \rightrightarrows DNC(G^{(0)}, H^{(0)})$ is a Lie groupoid. Moreover the Lie algebroid of $DNC(G,H)$ is $DNC(\gA G, \gA H)$.
\end{corollary}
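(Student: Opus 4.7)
The plan is to leverage the functoriality of the DNC construction (the remark immediately preceding the corollary) to transfer each piece of the groupoid structure from $G$ to $DNC(G,H)$.

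First I would observe that each structural map of $G$ sends $H$ into $H$ (or $H^{(0)}$, or $H^{(2)}$): the source $s:G\to G^{(0)}$ restricts to $s:H\to H^{(0)}$, similarly for $r$; the unit $u:G^{(0)}\to G$ satisfies $u(H^{(0)})\subset H$; the inverse $\iota:G\to G$ preserves $H$; and the multiplication $m:G^{(2)}\to G$ restricts to $m:H^{(2)}\to H$. Applying the DNC functor to each of these morphisms of pairs produces smooth maps $DNC(s),DNC(r):DNC(G,H)\to DNC(G^{(0)},H^{(0)})$, a unit map $DNC(u):DNC(G^{(0)},H^{(0)})\to DNC(G,H)$, an involution $DNC(\iota)$, and a multiplication defined on $DNC(G^{(2)},H^{(2)})$. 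Each groupoid axiom (associativity, unitality, source/range compatibility, inverse identities) is a commutative diagram of smooth maps between products of copies of $G$ (restricting to the same diagram for $H$); functoriality of DNC pushes these diagrams forward verbatim.

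The one genuine point to verify is that the multiplication so obtained is a map on the set of composable pairs of $DNC(G,H)$, that is, that there is a canonical diffeomorphism
\[
DNC(G^{(2)},H^{(2)})\;\simeq\;DNC(G,H)\times_{DNC(s),DNC(r)}DNC(G,H).
\]
Since $s$ and $r$ are submersions and their restrictions to $H$ are also submersions onto $H^{(0)}$, the pair $(G^{(2)},H^{(2)})=(G\times_{s,r}G,\,H\times_{s,r}H)$ is a transverse fiber product of pairs. Over $t\neq0$ both sides equal $G^{(2)}\times\R^*$ on the nose. Over $t=0$ one checks, using the local model $\Theta$ and the transversality of $s$ and $r$ with respect to $H$, that the normal bundle of $H\times_{s,r}H$ inside $G\times_{s,r}G$ is canonically isomorphic to $\cN^G_H\times_{Ts,Tr}\cN^G_H$, which is exactly the $t=0$ fiber of the right-hand side. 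This is the main obstacle; it is a local computation in coordinates adapted to the submersions $s,r$ and the submanifold $H$. To finish the groupoid structure I would check that $DNC(s)$ is a submersion, which again follows locally from the fact that $s$ is a submersion: in the coordinates provided by $\Theta$, the map $DNC(s)$ looks like $s\times\id_\R$ for $t\neq0$ and $Ts|_{\cN^G_H}$ for $t=0$, both of which are submersions.

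For the Lie algebroid, recall from the paragraph before Definition \ref{transverse} that $\gA\Gamma\simeq\cN^{\Gamma}_{\Gamma^{(0)}}$ for any Lie groupoid $\Gamma$. Applying DNC to the inclusion of pairs $(G^{(0)},H^{(0)})\hookrightarrow(G,H)$ identifies $DNC(G^{(0)},H^{(0)})$ as a closed submanifold of $DNC(G,H)$, playing the role of the units. Computing its normal bundle piecewise: for $t\neq0$ we get $\gA G\times\R^*$; at $t=0$ the normal directions split into the normal directions of $H^{(0)}$ inside $G^{(0)}$ (giving $\cN^{G^{(0)}}_{H^{(0)}}$) together with the fiberwise normal directions of $\gA H$ inside $\gA G|_{H^{(0)}}$, whose direct sum is precisely $\cN^{\gA G}_{\gA H}$. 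Patching these pieces using the same $\Theta$-local model yields the smooth structure of $DNC(\gA G,\gA H)$, which is the claimed identification. The anchor and bracket are obtained by applying DNC to the anchor $\varrho:\gA G\to TG^{(0)}$ and by noting that right-invariant vector fields are preserved under functorial constructions, so no separate verification is needed.
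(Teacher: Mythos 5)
Your proof follows exactly the route the paper takes: the corollary is stated there as an immediate consequence of the functoriality of the DNC construction, and your argument is that same functoriality argument with the details (transfer of structural maps, identification of the composable pairs via the fiber-product of normal bundles, submersivity of $DNC(s)$, and the normal-bundle computation of the algebroid) filled in. The elaboration is sound, and you correctly isolate the only nontrivial point, namely $DNC(G^{(2)},H^{(2)})\simeq DNC(G,H)\times_{DNC(s),DNC(r)}DNC(G,H)$.
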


A the set level $DNC(G,H)=G\times \R^* \sqcup \cN_H^G\times\{0\}$ and the normal space $\cN_H^G$ is equipped with a Lie groupoid structure with units $\cN_{H^{(0)}}^{G^{(0)}}$ called the \emph{normal groupoid} of the inclusion of $H$ in $G$ its Lie algebroid is $\cN_{\gA H}^{\gA G}$.

\medskip \noindent {\bf Example.}  \label{sectionTangGro}
The first and most famous example of groupoid resulting from the $DNC$ construction is the \emph{tangent groupoid} of Connes \cite{ConnesNCG}: let $M$ be a smooth manifold diagonally embedded in the pair groupoid $M\times M$, perform $DNC(M\times M,M)$ and restrict it to $M\times [0,1]$:  
$$G_T(M)=DNC(M\times M,M)\vert_{M\times [0,1]}=M\times M\times (0,1] \sqcup TM\times \{0\} \rightrightarrows M\times [0,1]$$
Similarly, the \emph{adiabatic groupoid} of a Lie groupoid $G\rightrightarrows G^{(0)}$ is the restriction over $G^{(0)}\times [0,1]$ of $DNC(G,G^{(0)})$ \cite{MonthPie,NWX}: 
$$G_{ad}=G\times (0,1] \sqcup \gA G\times \{0\} \rightrightarrows G^{(0)}\times [0,1]$$

\subsubsection{Blowup groupoid}
\label{sec:blowup}

We keep the notation of the previous section: $V\subset M$ is a submanifold of a manifold $M$ and $DNC(M,V)$ is the deformation to the normal cone of $V$ in $M$. 

Recall that $\varphi : DNC(M,V)=M\times \R^*\sqcup \cN_V^M\times \{0\} \rightarrow M\times \R$ is the natural (smooth) map. We will consider the manifold with boundary $DNC_+(M,V)=\varphi^{-1}(M\times \R_+)=M\times \R^*_+\sqcup \cN_V^M$.

\smallskip  The \emph{scaling action} of \(\R_+^*\) on \(M\times \R^*\) extends to the \emph{zooming action} of  \(\R_+^*\) on \(DNC_+(M,V)\) :
\[  \begin{array}{ccc} \DNC_+(M,V)\times \R^* & \longrightarrow & \DNC_+(M,V) \\ (z,t,\lambda) & \mapsto & (z, \lambda t) \ for \ t\not=0  
\\    (x,X,0,\lambda) & \mapsto & (x,  \frac{1}{\lambda} X,0) \ for \ t=0  \end{array} 
\]

By functoriality, the manifold $V\times \R_+=DNC_+(V,V)$ embeds in \(\DNC_+(M,V)\). The zooming action is free and proper on the open subset  \(\DNC_+(M,V)\setminus V\times \R_+ \) of $\DNC_+(M,V)$.  We let the \emph{spherical blowup} of $V$ in $M$ be: 

\[\SBlup(M,V)=\big( \DNC_+(M,V)\setminus V\times \R_+ \big)/\R_+^*=M\setminus V \cup \mathbb{S}(N_V^M)\ .\]

\begin{remark}
The spherical blowup construction is functorial \enquote{wherever it is defined}. Precisely, suppose that we have a commutative diagram
$\xymatrix{V\ar@{^{(}->}[r]\ar[d]_{f\vert_V}&M\ar[d]^{f}\\V'\ar@{^{(}->}[r]&M'}$ where the horizontal arrows are embedings of submanifolds. Functoriality of the deformation to the normal cone constructions yields a smooth map $\DNC(f):DNC(M,V)\to DNC(M',V')$. This smooth map restricts to the map $\DNC(f)_+:DNC_+(M,V)\to DNC_+(M',V')$ which is equivariant under the zooming action.

\smallskip Let $SU_f(M,V)=\DNC_+(M,V)\setminus \DNC(f)^{-1}(V'\times \R_+)$ and define \[\SBlup_f(M,V)=SU_f/\R_+^*  \subset \SBlup(M,V)\]
Then $\DNC(f)_+$ passes to the quotient:
\[\SBlup(f):\SBlup_f(M,V)\to \SBlup(M',V').\]

\end{remark}

If $\xymatrix{V\ar@{^{(}->}[r]\ar[d]_{g\vert_V}&M\ar[d]^{g}\\V'\ar@{^{(}->}[r]&M'}$ is another smooth \emph{map of embedings} we will denote $SU_{f,g}(M,V)=\DNC_+(M,V)\setminus (\DNC(f)^{-1}(V'\times \R_+)\cup \DNC(g)^{-1}(V'\times \R_+))$ and $\SBlup_{f,g}(M,V)$ its quotient under the zooming action.

\smallskip A consequence of the preceding remark is 
\begin{corollary}
 If $G\overset{r,s}{\rightrightarrows} G^{(0)}$ is a Lie groupoid and $H$ is a subgroupoid, $\SBlup_{r,s}(G,H) \rightrightarrows \SBlup(G^{(0)}, H^{(0)})$ is a Lie groupoid. Moreover the Lie algebroid of $SBlup_{r,s}(G,\Gamma)$ is $SBlup_{r,s}(\gA G,\gA \Gamma)$.
\end{corollary}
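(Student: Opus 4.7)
The plan is to bootstrap from the preceding $\DNC$ corollary and descend the groupoid structure through the zooming action. The key observation is that the whole groupoid structure on $\DNC_+(G,H)$ is $\R_+^*$-equivariant, so on the open subset where the zooming action is free and proper it passes to the quotient.

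First, the preceding corollary gives that $\DNC(G,H) \rra \DNC(G^{(0)}, H^{(0)})$ is a Lie groupoid whose structural maps $r, s, u, \iota, m$ are obtained by applying $\DNC$ functorially to those of $G$. Restricting to $t \geq 0$ yields a longitudinally smooth groupoid $\DNC_+(G,H) \rra \DNC_+(G^{(0)}, H^{(0)})$. I would then verify that the zooming action acts by groupoid automorphisms: since the structural morphisms of $G$ do not involve the parameter $t$, functoriality produces $\DNC$-morphisms that intertwine the $t \mapsto \lambda t$ scaling on source and target, and therefore commute with the zooming action.

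Second, I identify $SU_{r,s}(G,H)$ as an open Lie subgroupoid of $\DNC_+(G,H)$. By definition, it is the common preimage under the source and range maps of the ``safe zone'' $SU(G^{(0)}, H^{(0)}) = \DNC_+(G^{(0)}, H^{(0)}) \setminus H^{(0)}\times \R_+$. Using the groupoid axioms, this subset is stable under multiplication and inversion and contains the units lying over $SU(G^{(0)}, H^{(0)})$; it is thus an open subgroupoid. The zooming action restricted to $SU_{r,s}(G,H)$ is free and proper: freeness is obtained by applying $s$ and reducing to the known freeness on $SU(G^{(0)}, H^{(0)})$ (if $\lambda\cdot\gamma = \gamma$ then $\lambda\cdot s(\gamma)=s(\gamma)$, forcing $\lambda=1$), and properness follows similarly since $s$ is equivariant. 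Quotienting then yields a smooth manifold $\SBlup_{r,s}(G,H)$ with induced groupoid structure over $\SBlup(G^{(0)}, H^{(0)})$; the structural maps descend because the zooming action is by automorphisms.

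For the algebroid assertion, I would apply exactly the same argument to the inclusion of vector bundles (viewed as group bundles) $\gA H \hookrightarrow \gA G$. The previous corollary gives $\gA \DNC(G,H) = \DNC(\gA G, \gA H)$ compatibly with the $\R_+^*$-action, and the differential of the quotient construction identifies $\gA \SBlup_{r,s}(G,H)$ with $\SBlup_{r,s}(\gA G, \gA H)$. The main obstacle is the careful verification that the zooming action is free and proper not merely on the base $SU(G^{(0)}, H^{(0)})$ but on $SU_{r,s}(G,H)$ itself; as indicated, this reduces cleanly to the base case via equivariance of the source map, which is why the condition cutting out $SU_{r,s}$ (using \emph{both} $r$ and $s$) is exactly what is needed.
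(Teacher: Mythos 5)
Your proof is correct and takes essentially the same route the paper intends: the corollary is stated there as an immediate consequence of the functoriality remark for $\SBlup$, and your argument simply fills in the details of that derivation — equivariance of the structural maps of $\DNC(G,H)$ under the zooming action, the identification of $SU_{r,s}(G,H)$ as an open subgroupoid over $\DNC_+(G^{(0)},H^{(0)})\setminus H^{(0)}\times\R_+$, and the reduction of freeness and properness to the unit space via the equivariant source map. Your closing observation that cutting out $SU_{r,s}$ with \emph{both} $r$ and $s$ is what makes the set a subgroupoid (stable under inversion, with the correct unit space) while also placing it in the free locus is exactly the right point to emphasize.
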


\medskip \noindent {\bf Examples}
\begin{enumerate} \item
If $G\rightrightarrows G^{(0)}$ is a Lie groupoid define $\G=G\times \R\times \R \rightrightarrows G^{(0)}\times \R$, the product of $G$ with the pair groupoid on $\R$ . 
One can check that 
$$\SBlup_{r,s}(\G,\G^{(0)}\times \{(0,0)\})=\DNC_+(G,G^{(0)})\rtimes \R_+^* \rightrightarrows G^{(0)}\times \R$$ 
and recover the \emph{Gauge adiabatic groupoid} of \cite{DS1}.

\item Let $V\subset M$ be a hypersurface. The blowup procedure enables to recover groupoids and spaces involved in the pseudodifferential calculus on manifold with boundary. In particular: 
$$\underbrace{G_b=\SBlup_{r,s}(M\times M,V\times V)}_{\mbox{The b-calculus groupoid}} \subset \underbrace{\SBlup(M\times M,V\times V)}_{\mbox{Melrose's b-space}}$$ 
$$\underbrace{G_0=\SBlup_{r,s}(M\times M,\Delta(V))}_{\mbox{The 0-calculus groupoid}} \subset \underbrace{\SBlup(M\times M,\Delta(V))}_{\mbox{Mazzeo-Melrose's 0-space}}$$

\item One can iterate these constructions to go to the study of manifolds with corners, or consider a foliation with no holonomy on $V$, or define the holonomy groupoid of a manifold with iterated fibered corners \emph{etc.}
\end{enumerate}

%%%%%%%%%%%%%%%%%%%%%%%%%%%%%%% pdo.tex
\section{Pseudodifferential calculus on Lie groupoids}

 The ``classical'' pseudodifferential calculus was developed in the 1960's and was crucial in the Atiyah-Singer index theorem (\cite{AtSing1}). 

Pseudodifferential operators appear naturally when trying to solve (elliptic) differential equations. Using Fourier transform, one associates canonically to a differential operator a polynomial function - its symbol. The composition of differential operators is not commutative and therefore it does not induce just the product of these polynomials, but at least the leading term of the symbol of the product is the product of the leading terms of the symbols. When trying to solve such an equation, one then naturally tries to invert this symbol. This inverse is no longer a polynomial of course, but one can still associate to it an operator - a pseudodifferential operator.

\medskip The pseudodifferential operators are used in many different parts of mathematics. Information on pseudodifferential operators and much more can be found in the classical books \cite{Hormander1, Hormander2, Hormander3, Hormander4, Shubin, Taylor, Treves1, Treves2}.

\medskip Here we will concentrate to the use of pseudodifferential operators in connection with Lie groupoids and non commutative geometry.

\medskip In \cite{ConnesLNM}, Alain Connes, in order to generalize the Atiyah-Singer index theorem for families (\cite{AtSing4}) to the case of general foliations, considered the $C^*$-algebra of the holonomy groupoid as a noncommutative generalization of the space of parameters and studied index problems with values in this algebra. He therefore introduced the pseudodifferential calculus on the holonomy groupoid of a foliation. 

This pseudodifferential calculus was easily extended to general Lie groupoids (see \cite{MonthPie, NWX}). In this way one constructs an analytic index map $K_*(C_0(\gA^* G))\to K_*(C^*(G))$ for every Lie groupoid $G$.

\medskip Alain Connes made another beautiful observation. His tangent groupoid that we described in section \ref{sectionTangGro} can be used in order to construct the analytic index of elliptic operators in a differential and pseudo-differential free way. The fact that this indeed coincides with the analytic index  of elliptic operators is just a consequence of the existence of a pseudodifferential calculus on every Lie groupoid.

\medskip In this section we will discuss various constructions of this pseudodifferential calculus on groupoids and the construction of the index.

\subsection{Distributions on $G$ conormal to $G^{(0)}$}

The point of view developed by Connes is the following: locally the foliation looks like a fibration. On a \emph{foliation chart} $\Omega_i\simeq U_i\times T_i$, where $T_i$ is the local transversal and $U_i$ represents the leaf direction, a pseudodifferential operator $P_i$ is a family indexed by $T_i$ of operators on $U_i$ (in the sense of \cite{AtSing4}). Connes then defines a pseudodifferential operator on the foliation as a finite sum $f+\sum_i P_i$ of such local pseudodifferential families $P_i$ (with compact support) and $f\in C_c^\infty(G)$.

It was then quite easy to extend this calculus to a general Lie groupoid and this was done independently in \cite{MonthPie} and \cite{NWX}. There, pseudodifferential operators appear as $G$-invariant pseudodifferential families acting on the source fibers of $G$. 

In fact, it is probably easier and more natural to consider pseudodifferential operators on Lie groupoids as distributions on $G$ which are conormal to $G^{(0)}$. This point of view appears in various calculi of Melrose (on some spaces that contain Lie groupoids as dense open subsets -- see \eg \cite{Melbook}) and in \cite{AndrSk2}. It is explained and developed in \cite{LMV,LescVas1} where the interested reader will have all details  and complete description of the pseudodifferential calculus on a Lie groupoid.

\bigskip
Given a manifold $M$ and a (locally) closed submanifold $V$, conormal distributions are some particular distributions on $M$, with singular support in $V$.

\paragraph{A remark on densities.} As the elements of the convolution algebra of a groupoid are sections of a density bundle $\Omega^{1/2}$ rather than functions, the distributions that we consider are generalised sections of the same density bundle. The distribution associated with a smooth section of a bundle $E$ over a manifold $M$ is a continuous linear mapping on the topological vector space $C_c^{\infty}(M;\Omega^1(M)\otimes E^*)$ of smooth sections with compact support of the tensor product of the bundle of one densities on $M$ with the dual bundle of $E$. In order to simplify our exposition we will drop all these trivial bundles and just consider functions - although this issue is not completely trivial. We will in fact assume that coherent choices of sections of these bundles are made.

A guiding principle is that symbols (of scalar operators) are indeed functions.

\subsubsection{Symbols and conormal distributions} 

\paragraph{Symbols and conormal distributions on $\R^n$.}
Let $\alpha=(\alpha_1,\ldots,\alpha_n)\in \N^n$. Put $|\alpha|=\sum\alpha_i$. The map $D_\alpha:f\mapsto \frac{\partial^{|\alpha|}f}{(\partial x_1)^{\alpha_1}\ldots (\partial x_n)^{\alpha_n}}(0)$ is a distribution on $\R^n$ with (singular) support $0$. In Fourier terms it can be written as $D_\alpha(f)=\frac{1}{(2\pi )^n}\int_{\R^n} (i\xi)^\alpha \hat f(\xi)\,d\xi$.

A \emph{classical symbol} on $\R^n$ of order $m\in \Z$ (or in $\C$) is a function $a$ on $\R^n$ that can be written as $$a(\xi)\sim \sum_{k=0}^{+\infty}a_{m-k}(\xi),$$ where $a_j$ is a smooth function on $\R^n\setminus\{0\}$ homogeneous of degree $j$, \ie such that, for $t\in \R_+^*$ and $\xi \in \R^n\setminus\{0\}$ we have $a_j(t\xi)=t^ja_j(\xi)$.

The notation $\sim $ means that for every $k\in \N$, and every $\alpha\in \N^n$, there is a constant $M_{k,\alpha}$ such that, for $\|\xi\|\ge 1$, we have $$\frac{\partial^{|\alpha|}(a-\sum_{j=0}^{k-1}a_{m-j})}{(\partial \xi_1)^{\alpha_1}\ldots (\partial \xi_n)^{\alpha_n}}(\xi)\le M_{k,\alpha}\|\xi\|^{m-k-|\alpha|}.$$
Such a symbol $a$ gives rise to a distribution using the formula $$P_a(f)=\frac{1}{(2\pi )^n}\int_{\R^n}a(\xi)\hat f(\xi)\,d\xi.$$
\begin{itemize}
\item The function $a$ is called the \emph{total symbol} of $P_a$. It is the Fourier transform of $P_a$ since $a(\xi)=P_a(h_\xi)$ where $h_\xi(x)=e^{i\langle x|\xi\rangle}$. 
\item The homogeneous function $a_m$ is called the \emph{principal symbol} of $P_a$. Note that $a_m(\xi)=\lim_{t\to +\infty}t^{-m}a(t\xi)$ - and therefore $a_m$ only depends on $P_a$.
\item The support of the distribution $P_a$ is now $\R^n$, but its \emph{singular support} is $0$: For every neighborhood $V$ of $0$, one can write $P_a=Q_a+\kappa$ where $Q_a$ has support in $V$ and $\kappa$ is a smooth (Schwartz) function. 

\end{itemize}

\paragraph{Symbols on a vector bundle.}
 Let now $p:E\to B$ be a real vector bundle over a manifold $B$. We consider symbols on $E$ as being families - indexed by $B$ of symbols on the fibers $(E_x)_{x\in B}$. Such a symbol is then a function $a:E^*\to \C$ where $E^*$ is the dual vector bundle such that $$a(x,\xi)\sim \sum_{k=0}^{+\infty}a_{m-k}(x,\xi),$$ where $a_j$ is a smooth function on $E^*\setminus B$ (where $B\subset E^*$ is the $0$ section of the bundle $E^*$) homogeneous of degree $j$ in $\xi$, \ie such that, for $t\in \R_+^*$, $x\in B$ and $\xi \in E_x^*\setminus\{0\}$ we have $a_j(x,t\xi)=t^ja_j(x,\xi)$.

The writing $\sim $ means here that, in local coordinates, putting $B=\R^p$ and $E=\R^p\times \R^n$, for every $(k,\alpha,\beta,K)$ where $k\in \N$, $\alpha\in \N^n,\ \beta\in \N^p$ and $K\subset B$ is a compact subset, 

 there is a constant $M_{k,\alpha,\beta,K}$ such that, for $x\in K$ and  $\|\xi\|\ge 1$, we have $$\frac{\partial^{|\alpha|+|\beta| }(a-\sum_{j=0}^{k-1}a_{m-j})}{(\partial x_1)^{\beta_1}\ldots (\partial x_p)^{\beta_p}\,\partial \xi_1)^{\alpha_1}\ldots (\partial \xi_n)^{\alpha_n}}(x,\xi)\le M_{k,\alpha,\beta,K}\|\xi\|^{m-k-|\alpha|}.$$

\begin{remark}
 Note that giving a symbol $a$ of order $m$ on $E^*$ is equivalent to a homogeneous smooth function $b$ of order $m$ on $E^*\times \R_+\setminus B\times \{0\}$. \begin{itemize}
\item given a homogeneous smooth function $b:E^*\times \R_+\setminus B\times \{0\}\to \R$, put $a(x,\xi)=b(x,\xi,1)$;
\item given a symbol $a$, put $b(x,\xi,t)=t^{-m}a(x,\dfrac{\xi}{t})$ for $t\ne 0$ and $b(x,\xi,0)=a_m(u,\xi)$ (where $a_m$ is the principal symbol of $a$.)
\end{itemize}
The expansion $a(x,\xi)\sim \sum_j a_{m-j}(x,\xi)$ corresponds to the Taylor expansion of $b$ at $t=0$: we have $b(x,\xi,t)\sim \sum_j t^ja_{m-j}(x,\xi)$.
\end{remark}

\paragraph{Associated conormal distributions.}

To such a symbol, we may still associate a distribution $P_a$ given by the formula $$P_a(f)=\frac{1}{(2\pi )^n}\int_{E^*}a(x,\xi)\hat f(x,\xi)\,dx\, d\xi$$ (where $\hat f(x,\xi)=\int_{E_x}e^{-i\langle u|\xi\rangle}f(x,u)\,du$).

\begin{itemize}
\item The \emph{singular support} of this distribution is contained in $B\subset E$.

\item The function $a$ is called the \emph{total symbol} of $P_a$. The homogeneous function $a_m$ is called the \emph{principal symbol} of $P_a$. 
\end{itemize}

\paragraph{Symbols and conormal distributions on a manifold.} Let now $M$ be a manifold and $B\subset M$ a closed submanifold of $M$. The tubular neighborhood construction provides us with a neighborhood $U$ of $B$ in $M$ and a diffeomorphism $\varphi :N\to U$ where $N=\cN_B^M$ is the normal bundle: for $x\in B$, we have $N_x=T_xM/T_xB$. The requirement for such a diffeomorphism is $\varphi (b)=b$ for every $b\in B$ and, for every $x$ in $B$, the differential $df:T_xN\to T_xM$ satisfies $p_x\circ df_x(\xi)=\xi$ for $\xi \in N_x\subset T_x(N)$ where $p_x:T_xM\to N_x=T_xM/T_xB$ is the projection.

Using $\varphi$, we obtain a family of distributions on $M$: those that are a sum $Q=\varphi_*(P_a)+\kappa$ of a smooth function $\kappa$ on $M$ (with compact support) and a conormal distribution $P_a$ on $N$ where $a$ is a symbol on $N^*$: we write $$Q(f)=\int_M \kappa (y)f(y)\,dy+\int _{N^*}a(x,\xi)\Big(\int_{u\in N_x}e^{-i\langle u|\xi\rangle}f\circ \varphi(x,u)\,du\Big)\,dx\,d\xi.$$

\paragraph{Diffeomorphism invariance of conormal distributions and principal symbol.}
It turns out that \begin{itemize}
\item the space of distributions on $M$ of the form $\varphi_*(P_a)+\kappa$ does not depend on the partial diffeomorphism $\varphi:N_B^M\to M$;
\item the principal symbol $a_m$ of $\varphi_*(P_a)+\kappa$ does not depend on $\varphi$ either.
\end{itemize}

\paragraph{Remark.} We can also write $$Q(f)=\lim_{R\to\infty}\int_M \kappa _R(y)f(y)\,dy\ \ \hbox{with}\ \ \kappa_R(y)=\kappa(y)+\chi(y)\int _{\xi\in N_{p(y)}^*,\ \|\xi\|\le R}a(p(y),\xi) e^{-i\langle \theta(y)|\xi\rangle} \,d\xi,$$ where $\chi\in C^\infty(M)$ is a smooth function which is equal to $0$ outside $U$ and to $1$ for $y\in B$ (taking into account the jacobian of $\varphi$), and where we have written $\varphi^{-1}(y)=(p(y),\theta(y))$ with $p(y)\in B$ and $\theta(y)\in (\cN_B^M)_{p(y)}$.

\paragraph{Conormal distributions.} We denote by $\cI_c^m(M,B)$ the space of such conormal distributions with compact support. The map $\sigma_m:\varphi_*(P_a)+\kappa \mapsto a_m$ is the principal symbol map.

\medskip Note that we have a natural exact sequence $0\to \cI_c^{m-1}(M,B)\to \cI_c^m(M,B)\overset{\sigma_m}{\longrightarrow}C^\infty_m((\cN_B^M)^*\setminus B)\to 0$ where we denoted by $C^\infty_m((\cN_B^M)^*\setminus B)$ the space of smooth functions on $(\cN_B^M)^*\setminus B$ which are homogeneous of degree $m$.

\begin{definition}
A (classical) pseudodifferential operator of order $m$ on a Lie groupoid $G\rra M$ is a conormal distribution $P\in \cI_c^m(G,M)$.
\end{definition}

\subsubsection{Convolution}

Let $G\rra M$ be a Lie groupoid.

The convolution for $C^\infty_c(G)$ can be understood in the following way. Let $G^{(2)}=\{(\alpha,\beta)\in G\times G;\ s(\alpha)=r(\beta)\}$ be the set of composable elements, and let $p_1,p_2,m:G^{(2)}\to G$ be the maps defined by $p_1(\alpha,\beta)=\alpha$, $p_2(\alpha,\beta)=\beta$ and $m(\alpha,\beta)=\alpha\beta$.

Take $f_1,f_2\in C_c^\infty(G)$. We then may write $f_1\ast f_2=m_!(p_1^*(f_1) . p_2^*(f_2))$, where:
\begin{itemize}
\item $p_i^*:C^\infty (G)\to C^\infty(G^{(2)})$ is given by $p_i^*(f_i)=f_i\circ p_i$;
\item $p_1^*(f_1) . p_2^*(f_2)$ is just the pointwise product of the functions $p_i^*(f_i)$;
\item $m_!:C_c^\infty (G^{(2)})\to C_c^\infty(G)$ is the integration along the fibers of the submersion $m$: 

$m_!(f)(\gamma)=\int_{\alpha \beta=\gamma}f(\alpha,\beta)\,d\nu=m_!(f)(\gamma)=\int_{G^{r(\gamma)}} f(\alpha,\alpha^{-1}\gamma) d\nu^{r(\gamma)}(\alpha)$.
\end{itemize}
We wish to extend these three operations to the case when $f_1$ and $f_2$ are conormal distributions. 

\subsubsection*{Push-forward, pull-back,  product of distributions}

\begin{dmc}{Push-forward}
Let $\varphi:M\to M'$ be a smooth map. Dual to $\varphi^*:C^\infty(M')\to C^\infty(M)$ is a map $\varphi_*:C_c^{-\infty}(M)\to C_c^{-\infty}(M')$ given by $(\varphi_*(P))(f)=P(\varphi^*(f))$  for $P\in C_c^{-\infty}(M)$ and $f\in C^{\infty}(M')$.

\medskip Let $V\subset M$ be a submanifold. Assume that $\varphi$ is a submersion and that the restriction of $\varphi$ to $V$ is a diffeomorphism. Then, the image $\varphi_*$ of $P\in \cI^m_c(M,V)$, is a smooth distribution:  $\varphi_*(P)\in C_c^{\infty}(M')\subset C_c^{-\infty}(M')$.

We may indeed, by restricting to a neighborhood of $V$ in $M$, assume that $\varphi:M\to V\simeq M'$ is a vector bundle projection and that $P=P_a$ where $a$ is a symbol on the dual bundle $M^*$. Then $P$ is in fact a family of pseudodifferential operators $(P_x)_{x\in V}$ and $$\varphi_*(P)(f)=\int_V f(x)P_x(1)\,dx=\int_V f(x)a(x,0)\,dx.$$ In other words, $\varphi_*(P)$ is the distribution associated with the function $x\mapsto a(x,0)$.
\end{dmc}

\begin{dmc}{Pull-back by a submersion}
Let $p:M'\to M$ be a submersion. Dual to the integration along the fibers $p_!:C_c^\infty(M')\to C_c^\infty(M)$ is a map $p^!:C^{-\infty}(M)\to C^{-\infty}(M')$ given by $(p^!(P))(f)=P(p_!(f))$ for $P\in C^{-\infty}(M)$ and $f\in C_c^{\infty}(M')$. The map $p^!$ extends to distributions the map $p^*:C^{\infty}(M)\to C^{\infty}(M')$.

\begin{proposition} Let $p:M'\to M$ be a submersion. Let  $V\subset M$ be a closed submanifold. Put $V'=p^{-1}(V)$. 
\begin{enumerate}
\item The normal bundle of $V'$ in $M'$ identifies with the pull back of the normal bundle of $V$ in $M$. 
\item Assume that $P\in \cI_c^m(M,V)$. Then $p^!(P)\in \cI_c^m(M',V')$. Under the identification of the normal bundle $N'$ of $V'$ in $M'$ with $p^*\cN_V^M$, the principal symbol of  $p^*P$ is given by  $\sigma_{m}(p^*P)=\sigma_{m}(P)\circ p$.
\end{enumerate}

\begin{proof}
The first statement is obvious. Thanks to it, we may assume that $M$ is a vector bundle $p:E\to V$ and $M'$ is the pull back vector bundle $E'=E\times_VV'\to V'$. The second statement is then immediate too.
\end{proof}
\end{proposition}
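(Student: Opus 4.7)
The plan is to treat the two statements in sequence. For part (1), I would observe that if $x \in V'$ with $y = p(x) \in V$, then the entire fiber $p^{-1}(y)$ through $x$ lies inside $p^{-1}(V) = V'$, so $\ker dp_x = T_x(p^{-1}(y)) \subset T_xV'$. Since $p$ restricts to a submersion $V' \to V$ with the same vertical tangent space, $dp_x$ sends $T_xV'$ onto $T_yV$. The short exact sequences $0 \to \ker dp_x \to T_xM' \to T_yM \to 0$ and $0 \to \ker dp_x \to T_xV' \to T_yV \to 0$ then produce, on passing to cokernels of the left-hand inclusions, a fiberwise map $\cN_{V'}^{M'}|_x \to \cN_V^M|_y$ which is surjective (because $dp_x$ is) and injective (because $\ker dp_x \subset T_xV'$ implies that any $v \in T_xM'$ with $dp_x(v) \in T_yV$ already lies in $T_xV'$). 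Smoothness of this identification is immediate and gives the vector bundle isomorphism $\cN_{V'}^{M'} \simeq p^*\cN_V^M$.

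For part (2), I would reduce to a vector bundle model. Using the local form of submersions together with tubular neighborhoods, pick coordinates on $M$ near a point of $V$ of the form $(x,y)$ with $V = \{y=0\}$ and $\cN_V^M$ the $y$-direction, and extend them to adapted coordinates $(x,y,z)$ on $M'$ near a point of $V'$ above, in which $p(x,y,z)=(x,y)$ and $V' = \{y=0\}$; the identification of (1) is then tautological. Writing $P = \varphi_*(P_a) + \kappa$ in such a chart, the smooth summand pulls back to $p^*\kappa \in C^\infty(M')$ since $p^!$ extends $p^*$. For the conormal part, a direct Fourier computation
\[ p^!(P_a)(g) = P_a(p_!g) = \frac{1}{(2\pi)^q}\int a(x,\xi) \Bigl(\int e^{-i\langle y,\xi\rangle} g(x,y,z)\,dy\,dz\Bigr)\,dx\,d\xi \]
identifies $p^!(P_a)$ with the conormal distribution on $M'$ whose total symbol is $(x,z,\xi) \mapsto a(x,\xi)$, that is, $a \circ p$ under the identification of normal bundles from (1). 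Patching via a partition of unity subordinate to a cover by such adapted charts yields $p^!(P) \in \cI_c^m(M', V')$ with principal symbol $\sigma_m(p^!P) = \sigma_m(P) \circ p$.

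The main obstacle is the coordinate step: ensuring that the tubular neighborhoods of $V$ in $M$ and of $V'$ in $M'$ can be chosen so that $p$ becomes fiberwise linear in the normal directions. This is exactly what part (1) combined with the local normal form of a submersion provides, but a little care is needed when gluing local models with a partition of unity — particularly to track the density factors carefully and to check that only the leading term of the homogeneous expansion contributes to $\sigma_m$, so that the lower-order terms (which do depend on the choice of $\varphi$) drop out of the principal symbol identity.
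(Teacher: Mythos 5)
Your proposal is correct and follows essentially the same route as the paper: establish the normal bundle identification, reduce to the model where $M$ is (a tubular neighborhood of $V$ identified with) a vector bundle and $M'$ its pullback, and observe that the total symbol simply pulls back. The paper compresses the Fourier computation and the gluing into the word \enquote{immediate}; your version merely spells out those details.
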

\end{dmc}

\begin{dmc}{Products of conormal distributions}
\begin{enumerate}
\item One can extend the (pointwise) product of functions to the case where one of them is a distribution. The pointwise product of two distributions is not always well defined.

Already in this way, we may define the convolution of a classical pseudodifferential operator $P$ with an element $f\in C_c^\infty(G)$: we have $P\ast f=m_*(p_1^!(P) . p_2^*(f_2))$. Then $p_1^!(P) . p_2^*(f_2)\in \cI_c(G^{(2)},p_1^{-1}G^{(0)})$ and the submersion $m:G^{(2)}\to G$ induces a diffeomorphism $p_1^{-1}(G^{(0)})\to G$, whence $P\ast f\in C_c^{\infty}(G)$ and in the same way, $f\ast P\in C_c^{\infty}(G)$.

From this it follows that pseudodifferential operators define \emph{multipliers} of $C_c^\infty(G)$.

\item To explain the convolution of two pseudodifferential operators we use the two following facts which reduce to linear algebra.

\begin{enumerate}
\item Let $M$ be a manifold, $V_1,V_2$ two closed submanifolds of $M$ that are transverse to each other. This means that, for every $x\in V_1\cap V_2$ we have $T_xM=T_xV_1+T_xV_2$ (we do not assume that this sum is a direct sum). Then if $Q_1\in \cI_c^{\ell_1}(M,V_1)$ and $Q_2\in \cI_c^{\ell_2}(M,V_2)$, then the distribution $Q_1.Q_2$ makes sense.
\item If moreover $Q_1.Q_2$ has compact support and $m:M\to M'$ is a submersion whose restriction to both $V_1$ and $V_2$ is a diffeomorphism $V_i\to M'$, then $m_*(Q_1.Q_2)\in \cI_c^{\ell_1+\ell_2}(M',m(V_1\cap V_2))$ and its principal symbol is the product of the symbols of $Q_1$ and $Q_2$ under the natural identification of the normal bundles: \begin{itemize}
\item the restriction $(\cN_{V_1}^M)_{|V_1\cap V_2}$ of $\cN_{V_1}^M$ to $V_1\cap V_2$ identifies with $\cN_{V_1\cap V_2}^{V_2}$;
\item the restriction $(\cN_{V_2}^M)_{|V_1\cap V_2}$ of $\cN_{V_1}^M$ to $V_1\cap V_2$ identifies with $\cN_{V_1\cap V_2}^{V_1}$; 
\item finally, using the map $m$, we identify $\cN_{V_1\cap V_2}^{V_1}$ and $\cN_{V_1\cap V_2}^{V_2}$ identify with $\cN_{m(V_1\cap V_2)}^{M'}$.
\end{itemize}
\end{enumerate}
Given $P_1\in \cI_c^{\ell_1}(G,G^{(0)})$ and $P_2\in \cI_c^{\ell_2}(G,G^{(0)})$ we then put \begin{itemize}
\item $M=G^{(2)}=\{(\alpha,\beta)\in G\times G;\ s(\alpha)=r(\beta)\}$;
 \item $Q_i=p_i^{!}(P_i)$ where $p_i:G^{(2)} \to G$ are the submersions $(\gamma_1,\gamma_2)\mapsto \gamma_i$;
 \item $M'=G$ and $m:(\alpha,\beta)\mapsto \alpha \beta$ is the composition.
\end{itemize}
It follows that $P_1P_2\in \cI_c^{\ell_1+\ell_2}(G,G^{(0)})$ with principal symbol $\sigma_{\ell_1}(P_1)\sigma_{\ell_2}(P_2)$.
\end{enumerate}
\end{dmc}

\paragraph{Formal adjoint.} One can also define the adjoint of a pseudodifferential operator $P$ by setting $P^*=j_*(\overline{P})$, where $j:G\to G$ is the diffeomorphism $\gamma \mapsto \gamma^{-1}$.

\subsubsection{Pseudodifferential operators of order $\le 0$}

Pseudodifferential operators with compact supports on a Lie groupoid $G\rra M$ appear as multipliers of $C_c^\infty(G)$. 

\begin{proposition} \label{negpdo}
Pseudodifferential operators with compact support of order $\le 0$ extend to multipliers of $C^*(G)$; pseudodifferential operators of order $<0$ are in fact elements of $C^*(G)$.
\end{proposition}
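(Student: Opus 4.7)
The plan is to establish the two assertions in the order stated: first that pseudodifferential operators of strictly negative order lie in $C^*(G)$, and then to derive the multiplier statement for order $\le 0$ by combining the convolution calculus established above with the classical $L^2$-boundedness of pseudodifferential operators applied fibrewise. In both cases the basic mechanism is approximation by smoothing operators, which are elements of $C_c^\infty(G) \subset C^*(G)$, with an error whose size is controlled by the order of $P$.

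For the order-$<0$ statement, I would start from the case $m < -\dim \gA G$: here the total symbol $a(x,\xi)$ is integrable in $\xi$ on each fibre of $\gA^* G$ with bounds uniform in $x$, so its fibrewise inverse Fourier transform is continuous, and the tubular neighbourhood description of conormal distributions realises $P$ as a continuous compactly supported kernel on $G$, \ie $P \in C_c(G) \subset C^*(G)$. For general $m < 0$, fix $\chi \in C_c^\infty(\gA^* G)$ equal to $1$ near the zero section and let $P_n$ be the operator with total symbol $\chi(\xi/n)\,a(x,\xi)$. Since $\chi(\xi/n)\,a$ is compactly supported in $\xi$, $P_n$ is of order $-\infty$, hence $P_n \in C_c^\infty(G) \subset C^*(G)$. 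The error $P - P_n$ is still of order $m<0$, but with symbol supported outside $\{|\xi|\le cn\}$, and standard uniform $L^2$-estimates then give $\|\pi^x(P - P_n)\|_{B(L^2(G_x))} = O(n^m)$ uniformly in $x$, yielding $P_n \to P$ in $C^*_r(G)$. To promote this to convergence in $C^*(G)$, one refines the estimate into an $I$-norm bound on the smoothing differences $P_n - P_{n'}$ --- using integration by parts against the oscillating Fourier kernel together with the compact support of $P$ --- so that $(P_n)$ is Cauchy for $\|\cdot\|_I$, hence for $\|\cdot\|_{\max} \le \|\cdot\|_I$ on $C_c^\infty(G)$, and its limit provides the required element of $C^*(G)$.

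For the multiplier property of order-$\le 0$ operators, the convolution calculus of the preceding subsection already shows that $P \ast f$ and $f \ast P$ lie in $C_c^\infty(G) \subset C^*(G)$ for every $f \in C_c^\infty(G)$, so $P$ is a two-sided multiplier of the dense $*$-subalgebra. The norm control reduces to the classical $L^2$-boundedness of order-$0$ pseudodifferential operators: applying $\pi^x$ gives $\|\pi^x(P)\|_{B(L^2(G_x))} \le C_P$ uniformly in $x$, because the relevant symbol seminorms of $P$ are uniformly bounded on its compact support. This yields the multiplier estimate in $C^*_r(G)$; to pass to $C^*(G)$ one bootstraps via the first part, since the iterates $(P^*P)^N$ have order $\to -\infty$ and therefore eventually lie in $C^*(G)$, after which a spectral-radius argument gives the maximal bound $\|P \ast f\|_{C^*(G)} \le C_P\,\|f\|_{C^*(G)}$.

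\textit{Main obstacle.} The subtle point throughout is bridging fibrewise $L^2$-estimates (which compute exactly the reduced norm) and the maximal $C^*$-norm, given that the canonical map $C^*(G) \to C^*_r(G)$ need not be injective. The two technical devices that address this --- the $I$-norm upgrade of the Fourier-cutoff approximation in the first step, and the spectral-radius bootstrap via $P^*P$ in the second --- are where the real work of the proof resides.
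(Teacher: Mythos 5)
Your treatment of the negative-order case is a legitimate alternative to the paper's. Where you approximate $P$ by Fourier-cutoff smoothing operators $P_n$ and control the error in the $I$-norm (which dominates both $C^*$-norms), the paper instead bootstraps through the $C^*$-identity: $(P^*P)^{2^k}$ has order $2^{k+1}m<-p$ for $k$ large, hence is a continuous compactly supported kernel and lies in $C^*(G)$, and one descends by induction on $k$ using $\|P\ast f\|^2=\|f^*\ast P^*\ast P\ast f\|$. Both routes work; the paper's avoids every symbol estimate beyond the single observation that order $<-p$ gives a continuous kernel, whereas yours rests on the (true, but not written out) scaling estimate $\|P-P_n\|_I=O(n^{m})$ term by term in the polyhomogeneous expansion, which is the real content of your first step and deserves a proof.

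The order-$0$ case, however, contains a genuine error. You assert that \enquote{the iterates $(P^*P)^N$ have order $\to-\infty$ and therefore eventually lie in $C^*(G)$.} This is false when $P$ has order exactly $0$: then $P^*P$ has order $0$, and so does every power $(P^*P)^N$; orders only decrease under iteration for operators of strictly negative order. Consequently your bridge from the fibrewise $L^2$-bound --- which controls only the reduced norm --- to the maximal norm collapses, and the announced spectral-radius argument has nothing to stand on. The device you need, and the one the paper uses, is the H\"ormander square-root trick: with $q$ equal to $1$ on the support of $\sigma_P$ and $c>|\sigma_P|$, choose $Q$ of order $0$ with principal symbol $q\sqrt{c^2+1-|\sigma_P|^2}$, so that $P^*P+Q^*Q=(1+c^2)|q|^2+R$ with $R$ of negative order, hence a bounded multiplier by the first part; then
$$\|Pf\|^2=\|f^*P^*Pf\|\le\|f^*(P^*P+Q^*Q)f\|\le\|P^*P+Q^*Q\|\,\|f\|^2$$
by positivity of $f^*Q^*Qf$. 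This inequality holds verbatim in any $C^*$-completion, maximal or reduced, which is exactly why no fibrewise $L^2$-theory --- and no passage between the two norms --- is required.
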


The first statement means that if $P$ is a pseudodifferential operator with compact support in $G$ and of order $\le 0$, then there exists a constant $c$ such that, for all $f\in C_c^\infty(G)$, we have $\|P\ast f\|\le c\|f\|$ and $\|f\ast P\|\le c\|f\|$ (this is true for both the maximal and the reduced $C^*$-norm of $G$).

\medskip \begin{proof} To establish this statement, first assume that $P$ is of order $<-p$ where $p=\dim G-\dim M$ is the dimension of the algebroid. Note that if $a$ is a symbol of order $< -p$, then $P_a$ is a continuous function. Therefore $P$ is a continuous function with compact support on $G$, and thus an element of $C^*(G)$.

If $P$ is of order $<-p/2$, then $\|P\ast f\|^2=\|f^*\ast P^*\ast P\ast f\|$ (and $\|f\ast P\|^2=\|f\ast P\ast P^*\ast f^*\|$) and as $P^*\ast P$ is of order $<-p$, it is in $C^*(G)$ and thus $\|P\ast f\|^2\le \|P^*\ast P\|\|f\|^2$. It follows that $P$ is a multiplier, and as $P^*\ast P\in C^*(G)$ we find $P\in C^*(G)$.

If $P$ is of negative order, $(P^*P)^{2^k}\in C^*(G)$ for some $k\in \N$, and by induction in $k$, $P\in C^*(G)$.

\medskip Let $P$ be a pseudodifferential operator of order $0$. 

Note first that every smooth function $q\in C_c^\infty(M)$ is a pseudodifferential operator of order $0$ with principal symbol $\sigma_q:(x,\xi)\mapsto q(x)$  - and of course a bounded multiplier: we have $(q\ast f)(\gamma)=q(r(\gamma))f(\gamma)$ and $(f\ast q)(\gamma)=f(\gamma)q(s(\gamma))$.

Let $q\in C_c(M)$ which is equal to $1$ on the support of $\sigma_P$ - \ie the projection on $M$ of the closure of $\{(x,\xi);\ \sigma_q(x,\xi)\ne 0\}$ (which is assumed to be compact in the space of half lines of the bundle $\gA^*$). Let $c\in \R_+$ with $c>\sigma_P(x,\xi)$ for all $(x,\xi)$. Put $b(x,\xi)=q(x)\sqrt{c^2+1-|\sigma_q(x,\xi)|^2}$, and let $Q$ be a pseudodifferential operator with principal symbol $b$. Then $P^*P+Q^*Q$ which has symbol $(1+c^2)|q|^2$ is of the form $(1+c^2)|q|^2+R$ where $R$ is of negative order and therefore $P^*P+Q^*Q$ is bounded. 

For all $f\in C_c(G)$, $\|Pf\|^2=\|f^*P^*Pf\|\le \|f^*P^*Pf+f^*Q^*Qf\|\le \|P^*P+Q^*Q\|\|f\|^2$, and thus $f\mapsto Pf$ is bounded.

In the same way $f\mapsto fP$ is bounded.
\end{proof}

As a consequence, one gets:

\begin{theorem}\label{thmExacSeq}
We have a short exact sequence of $C^*$-algebras $$0\to C^*(G)\to \Psi^*(G)\overset{\sigma}\to C_0(S\gA^*)\to 0$$ where $\Psi^*(G)$ is the closure of the algebra of order $0$ pseudodifferential operators in the multiplier algebra of $C^*(G)$ and $S\gA^*$ is the sphere bundle (the set of half lines) of the dual $\gA^*$ of the algebroid $\gA$ of $G$.
\end{theorem}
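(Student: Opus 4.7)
The plan is to exhibit $\sigma$ as an isometric $*$-isomorphism $\Psi^*(G)/C^*(G) \cong C_0(S\gA^*)$, built from the principal symbol map $\sigma_0$ on the dense subalgebra $\cI_c^0(G,M)$ of compactly supported order-$0$ pseudodifferential operators.

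First, on $\cI_c^0(G,M)$ the principal symbol $\sigma_0(P)$ is a smooth function on $\gA^*\setminus M$ homogeneous of degree $0$ with compact support in the base direction, hence identifies with an element of $C_c^\infty(S\gA^*)$. By the multiplicativity and involution properties established earlier for products and adjoints of conormal distributions, $\sigma_0:\cI_c^0(G,M)\to C_c^\infty(S\gA^*)$ is a surjective $*$-homomorphism with kernel $\cI_c^{-1}(G,M)\subset C^*(G)$ by Proposition~\ref{negpdo}. To bound $\|\sigma_0(P)\|_\infty\le\|P\|$, I use the source-fiber representations $\pi^x$: the operator $\pi^x(P)$ is a classical compactly supported order-$0$ pseudodifferential operator on the manifold $G_x$ whose principal symbol at the unit $u(x)$, under $T^*_{u(x)}G_x\cong\gA^*_x$, equals $\sigma_0(P)(x,\cdot)$. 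The classical lower bound $\|\pi^x(P)\|_{\cB(L^2(G_x))}\ge\|\sigma_0(P)(x,\cdot)\|_\infty$, combined with the fact that the maximal norm dominates $\sup_x\|\pi^x(P)\|$, yields the claim.

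The converse bound $\|\bar P\|_{\Psi^*(G)/C^*(G)}\le\|\sigma_0(P)\|_\infty$ is the crux and is obtained by the $C^*$-positivity trick already used in the proof of Proposition~\ref{negpdo}. Given $c>\|\sigma_0(P)\|_\infty$, choose $q\in C_c^\infty(M)$ with $0\le q\le 1$ equal to $1$ on the projection to $M$ of $\supp\sigma_0(P)$. The function $b=q\sqrt{c^2-|\sigma_0(P)|^2}$ is then a smooth compactly supported degree-$0$ homogeneous symbol, so by surjectivity in the first step there exists $Q\in\cI_c^0(G,M)$ with $\sigma_0(Q)=b$. By the choice of $q$, a direct computation gives $\sigma_0(P^*P+Q^*Q)=c^2 q^2$, so $P^*P+Q^*Q-c^2 q^2\in\cI_c^{-1}(G,M)\subset C^*(G)$ (where $q^2$ is viewed via its multiplier action on $C^*(G)$). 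In the $C^*$-algebra quotient $\Psi^*(G)/C^*(G)$, positivity of $\overline{Q^*Q}$ gives $\overline{P}^*\overline{P}\le c^2\overline{q^2}$, whence $\|\bar P\|^2\le c^2\|q\|_\infty^2\le c^2$. Letting $c\searrow\|\sigma_0(P)\|_\infty$ concludes.

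Combining the two bounds, $\sigma_0$ induces an isometric $*$-homomorphism $\cI_c^0(G,M)/\cI_c^{-1}(G,M)\hookrightarrow C_c^\infty(S\gA^*)$. Since $\Psi^*(G)$ is by definition the closure of $\cI_c^0(G,M)$ in $M(C^*(G))$, since $C^*(G)$ is the closure both of $C_c^\infty(G)$ and of $\cI_c^{-1}(G,M)$ in $M(C^*(G))$ (using $C_c^\infty(G)\subset\cI_c^{-1}(G,M)\subset C^*(G)$ from Proposition~\ref{negpdo}), and since $C_0(S\gA^*)$ is the sup-norm completion of $C_c^\infty(S\gA^*)$, this isometry extends by continuity to an isometric $*$-isomorphism $\Psi^*(G)/C^*(G)\cong C_0(S\gA^*)$, giving the short exact sequence. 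The hard part is the third paragraph: a direct operator-theoretic estimate of $\|P\|$ in terms of $\|\sigma_0(P)\|_\infty$ would require a Calder\'on--Vaillancourt-type analysis, whereas the $C^*$-positivity argument via the auxiliary operator $Q$ elegantly sidesteps this.
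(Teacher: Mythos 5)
Correct, and essentially the paper's own argument: your upper bound $\|\bar P\|\le\|\sigma_0(P)\|_\infty$ is the Hörmander square-root trick that the paper runs inside the proof of Proposition~\ref{negpdo} (here sharpened by working in the quotient and dropping the $+1$), and your lower bound via the fiberwise operators $\pi^x(P)$ is the same symbol-concentration argument that the paper writes out explicitly with the coherent states $\varphi_n$ on $G_x$. One small imprecision: your second paragraph bounds $\|\sigma_0(P)\|_\infty$ by the multiplier norm $\|P\|$ rather than by the quotient norm $\|\bar P\|$, so \enquote{combining the two bounds} does not literally give the isometry on $\cI_c^0(G,M)/\cI_c^{-1}(G,M)$; this is harmless, since the upper bound alone makes $\sigma_0(P)\mapsto\bar P$ a well-defined contractive surjective $*$-homomorphism $C_0(S\gA^*)\to\Psi^*(G)/C^*(G)$, and its injectivity (hence automatic isometry) follows by applying your estimate to $P-f$ with $f\in C_c^\infty(G)$, using $\sigma_0(P-f)=\sigma_0(P)$ and the density of $C_c^\infty(G)$ in $C^*(G)$ --- which is exactly the extra observation the paper records as $\langle\varphi_n,\lambda_x(f)\varphi_n\rangle\to0$ for $f\in C^*_r(G)$.
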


Note that this statement is true for the full groupoid $C^*$-algebra as well as for the reduced one.

\begin{proof}
First note that as $\Psi^*(G)$ contains $C_c^\infty(G)$, it contains its closure $C^*(G)$. 

The only statement which does not follow from prop. \ref{negpdo} is that the principal symbol map is well defined, \ie if $\sigma(P)\ne 0$, then $P\not\in C^*(G)$ (both for the full and reduced norm -- it is enough to check this for the reduced one).

In fact, one shows that for every pseudo differential operator $P$ of order $0$ every $x\in M$ and a nonzero $\xi \in \gA^*_x$, we have $\sigma_P(x,\xi)=\lim _{n\to \infty}\langle \varphi_n,\lambda_x(P)\varphi_n\rangle$ where $\varphi_n$ is a function on $G_x$ of $L^2$-norm $1$ whose support is concentrated around $x$ and whose Fourier transform is concentrated in $\R_+^*\xi$:  we may take, in local coordinates, $\varphi_n(y)=(2n)^{p/4} e^{-n\pi \|x-y\|^2-in\langle (y-x)|\xi\rangle}$. Here, $\lambda_x$ is the representation of $C^*(G)$ on $L^2(G_x)$ by left convolution - extended to the multipliers.

On the other hand, for $f\in C_c^\infty(G)$, we have $\lim _{n\to \infty}\langle \varphi_n,\lambda_x(f)\varphi_n\rangle=0$ and by continuity the same is true for $f\in C^*_r(G)$.
\end{proof}

\subsubsection{Analytic index}

The connecting map of the exact sequence of theorem \ref{thmExacSeq} is the \emph{analytic index} of the Lie groupoid $$\partial_G:K_{i+1}(C_0(S\gA^*))\to K_i(C^*(G)).$$

This analytic index can be improved a little by taking vector bundles into account.  Indeed, the starting point of an index problem is often a pair of bundles $E^{\pm}$ over $M$ together with a symbol of order $0$ which gives a smooth family of isomorphisms $a(x,\xi):E^+_x\to E^-_x$. 

Such a symbol defines an element in the \emph{relative $K$-theory}  of the morphism $\mu:C_0(M)\to C_0(S\gA^*)$, in other words an element of the $K$-theory of the \emph{mapping cone} $$C_\mu=\{(f,g)\in C_0(M)\times C_0(S\gA^*\times \R_+); \ \forall (x,\xi)\in S\gA^*,\ g(x,\xi,0)=f(x)\}$$
of $\mu$. This mapping cone is naturally isomorphic to $C_0(\gA^*)$ using the map $(x,\xi,t)\mapsto (x,t\xi)$.

Consider the morphism $\psi:C_0(M)\to \Psi^*(G)$ which associates to a (smooth) function $f$ the order $0$ (pseudo)differential operator multiplication by $f$. Note that we have $\mu=\sigma\circ \psi$. Using the commutative diagram $$\xymatrix{C_0(M)\ar[r]^\mu\ar[d]^\psi &C_0(S\gA^*)\ar@{=}[d]\\\Psi^*(G)\ar[r]^\sigma &C_0(S\gA^*)}$$ we obtain a morphism $\tilde\psi:C_0(\gA^*)=C_\mu\to C_\psi$. Now, we also have an exact sequence $$0\to C^*(G)\overset{e_G}\lra C_\psi\lra C_0(S\gA^*\times \R_+)\to 0.$$ As the algebra $C_0(S\gA^*\times \R_+)$ is contractible (and nuclear), the \emph{excision map} $e_G$ is a $KK$-equivalence. 

\begin{definition}
The \emph{analytic index map} of the Lie groupoid $G$ is the composition $$\ind_G=[e_G]^{-1}\circ [\tilde \psi]:K^j(\gA^*)=K_j(C_0(\gA^*))\to K_j(C^*(G)).$$

\end{definition}

The index $\partial _G$ is the composition of the morphism $K_{j+1}(C_0(S\gA^*))\to K_j(C_0(\gA^*))$ induced by the inclusion $S\gA^*\times \R_+^*\to \gA^*$ with the index map $\ind_{G})$.

\subsection{Classical examples}

The analytic index for groupoids recovers many classical situations.

\begin{enumerate}
\item Assume that $G=M\times M$ is just the pair groupoid. Then the corresponding index is the classical Atiyah-Singer index $K^0(T^*M)\to K_0(\cK)=\Z$ of (pseudo)differential operators on $M$, \ie the one constructed and computed in \cite{AtSing1}. 
\item Assume that $G$ is the groupoid $M\times_YM$ associated to a smooth fibration $\pi:M\to Y$. The corresponding index is the Atiyah-Singer index  $K^j(T_F^*M)\to K_j(C^*(G))=K^j(Y)$ of families of (pseudo)differential operators on the fibers of $\pi$, \ie the one constructed and computed in \cite{AtSing4}. 
\item  Assume that  $G$ is the groupoid $G=(\widetilde M\times \widetilde M)/\Gamma$ where $\Gamma$ is a countable group acting freely and properly on a manifold $\widetilde M$. The corresponding index   $K^j(T^*M)\to K_0(C^*(G))=K_j(C^*(\Gamma))$. This situation was introduced by Atiyah in \cite{AtiCovSp} where it is shown that the von Neumann dimension of the index (for $j=0$) is in fact the index $K^0(T^*M)\to \Z$. In the $C^*$-context, it was studied in \cite{MischFom} and has been since then studied in many, many papers...
\end{enumerate}

\subsection{Analytic index via deformation groupoids}

The deformation groupoids allow to shade a new light to the pseudodifferential calculus and, in the same time, construct the analytic index without use of pseudodifferential operators. This  was Connes' main motivation for introducing them.

Let $G\rra M$ be a Lie groupoid and let $G_{ad}=\gA\times \{0\}\cup G\times (0,1]$ be its adiabatic groupoid. Consider the evaluation maps $ev_0:C^*(G_{ad})\to C^*(\gA^*)$ and $ev_t:C^*(G_{ad})\to C^*(G)$ for $t\ne0$.

As the sequence $$0\to C^*(G\times (0,1])\lra C^*(G_{ad})\overset{ev_0}{\lra} C^*(\gA^*)\to 0$$ is exact and $C^*(G\times (0,1])$ is contractible, the evaluation $ev_0$ is $K$-invertible. We then have the following important theorem which is in a sense just an observation.

\begin{theorem}[Connes. \cf \cite{ConnesNCG, MonthPie, NWX, DebLescGroupoids}]
The analytic index is the composition $\ind_G=[ev_1]\circ [ev_0]^{-1}$.
\end{theorem}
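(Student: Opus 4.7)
The plan is to apply the pseudodifferential calculus of the previous subsection to the adiabatic groupoid $G_{ad}$ itself, in order to identify both $\ind_G = [e_G]^{-1} \circ [\tilde\psi]$ and the composition $[ev_1] \circ [ev_0]^{-1}$ as arising from the same extension. As a first step, I would build $\Psi^*(G_{ad})$ exactly as in Theorem \ref{thmExacSeq}, obtaining
$$0 \to C^*(G_{ad}) \to \Psi^*(G_{ad}) \xrightarrow{\sigma_{ad}} C_0(S\gA^*) \to 0.$$
The symbol quotient is $C_0(S\gA^*)$ rather than the sphere bundle of the algebroid of $G_{ad}$ itself: the extra $\partial_t$ direction introduced by the deformation parameter is tangent to the unit space $M\times[0,1]$ and so does not enter the principal symbol of a conormal distribution supported near $M\times[0,1]$. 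Let $\Phi:C_0(M\times[0,1])\to \Psi^*(G_{ad})$ denote the multiplication morphism, playing the role of $\psi$ at the adiabatic level.

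Next I would observe that the evaluations extend to $\Psi^*(G_{ad})$ with identity on the common symbol quotient. At $t=1$ this yields a commutative diagram of short exact sequences relating the PDO extension of $G_{ad}$ to that of $G$, sending $\Phi|_{t=1}$ to $\psi$. At $t=0$ one lands in $\Psi^*(\gA G)$; since the groupoid $\gA G$ is commutative, the fibrewise Fourier transform identifies $C^*(\gA G)\cong C_0(\gA^*)$ and identifies $\Psi^*(\gA G)$ with the $C^*$-algebra of multipliers by order-$0$ symbols, so that $ev_0\circ\Phi$ becomes the pullback $C_0(M)\to C_0(\gA^*)$ along $\gA^* \to M$, and the principal symbol map $\sigma_{ad}|_{t=0}$ becomes the class-at-infinity homomorphism into $C_0(S\gA^*)$. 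In these terms the adiabatic PDO extension, evaluated at $t=0$, coincides with the mapping cone extension of $\mu:C_0(M)\to C_0(S\gA^*)$ defining $C_\mu=C_0(\gA^*)$.

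Combining the two morphisms of extensions above into a single $KK$-theoretic diagram
$$\xymatrix{C^*(G_{ad}) \ar[r]^{ev_1}\ar[d]^{ev_0} & C^*(G) \ar[d]^{e_G} \\ C_0(\gA^*) \ar[r]^{\tilde\psi} & C_\psi}$$
and checking its commutativity by naturality of the excision/boundary constructions applied to the above extensions, one obtains $[e_G]\circ[ev_1] = [\tilde\psi]\circ[ev_0]$. Since $[ev_0]$ and $[e_G]$ are both $KK$-equivalences (their kernels $C^*(G\times(0,1])$ and $C_0(S\gA^*\times\R_+)$ being contractible), this rearranges to $[ev_1]\circ[ev_0]^{-1} = [e_G]^{-1}\circ[\tilde\psi] = \ind_G$, proving the theorem. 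The main obstacle is the $KK$-commutativity of the square just above: it requires setting up the adiabatic PDO calculus carefully (a step analogous to Sections 3.1--3.2 but for the longitudinally smooth groupoid $G_{ad}$ whose unit space has boundary) and tracking the identifications through the fibrewise Fourier transform at $t=0$.
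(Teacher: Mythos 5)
Your proposal is correct and follows essentially the same route as the paper: the paper's proof reduces the statement to the naturality of $\ind$ under restriction to the saturated submanifolds $M\times\{0\}$ and $M\times\{1\}$ of $G_{ad}^{(0)}$ together with the fact that $\ind_{\gA G}$ is the identity after fibrewise Fourier transform, and your morphisms of pseudodifferential extensions induced by $ev_0$ and $ev_1$ are precisely an explicit implementation of that naturality, with the same vector-bundle computation at $t=0$. One small correction: the symbol quotient of $\Psi^*(G_{ad})$ is $C_0(S\gA^*\times[0,1])$ (the unit space of $G_{ad}$ is $M\times[0,1]$), not $C_0(S\gA^*)$; this is harmless since the two are homotopy equivalent and one composes with evaluation at the endpoints.
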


The proof of this theorem reduces to the following two observations:
\begin{enumerate}
\item (\emph{Naturality of the analytic index.}) Let $G_1\rra M_1$ be a Lie groupoid and let $M_2\subset M_1$ be a closed submanifold which is \emph{saturated} for $G_1$ (\ie for $\gamma \in G_1$ we have $r(\gamma)\in M_2$ if and only if $s(\gamma)\in M_2$). Denote by $G_2\rra M_2$ the Lie groupoid $\{\gamma \in G_1;\ s(\gamma)\in M_2\}$. The algebroid $\gA_2$ of $G_2$ is the restriction to $M_2$ of the algebroid $\gA_1$ of $G_1$. We have restriction maps $r_G:C^*(G_1)\to C^*(G_2)$ and $r_{\gA^*}:C_0(\gA^*_1)\to C_0(\gA^*_2)$. 

Then the diagram \mbox{$\xymatrix{K_j(C_0(\gA^*_1))\ar[r]^{\ind_{G_1}}\ar[d]^{r_{\gA^*}} &K_j(C^*(G_1))\ar[d]^{r_G}\\ K_j(C_0(\gA^*_2))\ar[r]^{\ind_{G_2}}&K_j(C^*(G_2))}$} is commutative.

\item If $E\to B$ is a vector bundle considered as a Lie groupoid, then $$\ind_E:K_j(C_0(E^*))\to K_j(C^*(E))=K_j(C_0(E^*))$$ is the identity.
\end{enumerate}

\begin{remarks}
\begin{enumerate}
\item One can also use the deformation groupoids in order to prove index theorems. In \cite{ConnesNCG}, Alain Connes gives a beautiful proof of the Atiyah-Singer index theorem in $K$-theory - based on the analogue of the Thom isomorphism for crossed products by $\R^n$ of \cite{ConnesThom}. A different proof was given in \cite{DLN}. These proofs naturally generalise to more general index theorems.

\item An extra step is also taken in his lectures at the Coll\`ege de France (see also \eg \cite{ElNaNe}) where the asymptotics of the deformation groupoid are used in order to derive the cohomological formula of the Atiyah-Singer index theorem \cite{AtSing3}. This is a very important issue for index theory, but we will not discuss it any further here.
\end{enumerate}
\end{remarks}

\subsection{Deformation to the normal cone, zooming  action and PDO}\label{sectionDNCtoPDO}

The deformation groupoids as we saw give some insight to the pseudodifferential calculus on a groupoid. We will see in fact that the deformation groupoids allow to recover the pseudodifferential calculus itself, using the natural \enquote{zooming} action (called \emph{gauge action} in \cite{DS1}).

 \subsubsection{The zooming action of $\R_+^*$ on a deformation to the normal cone}\label{actionR}

Let $M$ be a smooth manifold and $V$ a closed submanifold. Denote by $N=\cN_V^M$ the normal bundle of $V$ in $M$. The group $\R_+^*$ acts smoothly on $DNC(M,V)$: for $t\in \R_+^*$ put $\alpha_t(z,\lambda)=(z,t\lambda)$ for $z\in M$ and $\lambda \in \R^*$ and $\alpha _t(x,U,0)=(x,\frac{U}t,0)$ for $x\in V$ and $U\in N_x$.

This zooming action gives two interpretations of conormal distributions.

 \subsubsection{Integrals of smooth functions}

Define first the space $\cS(M,V)$ of Schwartz functions on $DNC_+(M,V)$: notice that $DNC_+(M,V)$ is an open dense subset of the blow-up $SBlup(M\times \R,V\times \{0\})$. The space $\cS(M,V)$ is the set of smooth functions with compact support on $SBlup(M\times \R,V\times \{0\})$ which vanish at infinite order on the complement of $DNC_+(M,V)$. 

\begin{dmc}{The subspace $\cJ(M,V)$}
 An element $k\in \cS(M,V)$ defines a family $(k_t)_{t\ne 0}$ of smooth function on $M$. We define then the subspace $\cJ(M,V)\subset \cS(M,V)$ to be the set of elements $k\in \cS(M,V)$ such that $(k_t)_{t\ne 0}$ vanishes at infinite order at $0$ as a distribution on $M$, \ie such that for every $f\in C^\infty(M)$, the function $t\mapsto \langle k_t|f\rangle=\int _Mk_t(x)f(x)\,dx$ extends to a smooth function on $\R$ which vanishes at infinite order when $t\to 0$.

In local coordinates, \ie if $M$ is a vector bundle over $V$, identifying $DNC(M,V)$ with $M\times \R$, an element $k\in \cS(M,V)$ is in $\cJ(M,V)$ if and only if $\hat k$ vanishes at infinite order on $V\times \{0\}\subset M^*\times \R$. Indeed, in that case, $\langle k_t|f\rangle=\int _{V}\,du\Big(\int_{M^*_u}\hat{k_t}(u,t\xi)\hat f(u,\xi)\,d\xi\Big)$.
\end{dmc}

It is then easy to see - using local coordinates:

\begin{theorem}{\cite{DS1}}
For $m\in \C$, $\cI_c^m(M,V)$ is the set of distributions of the form $\int_{\R^+}k_t\,t^{-1-m}\,dt$ where $k$ runs over $\cJ(M,V)$.
\end{theorem}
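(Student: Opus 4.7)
The strategy is to work in local coordinates and reduce the claim to a Fourier-analytic identity in the fibers of the normal bundle. Via a tubular neighborhood $\theta: \cN_V^M \to M$ and a partition of unity, reduce to the case $M=V\times\R^n$ with $V$ identified as the zero section; the chart $\Theta(x,u,t)=(x,tu,t)$ for $t\neq 0$ identifies $DNC(M,V)$ with $M\times\R$. In this chart, an element $k\in \cS(M,V)$ is represented by a smooth function $\tilde k(x,u,t)$, compactly supported in $(x,t)$ and Schwartz in $u$; once the density transformation between the two charts is taken into account, the family $k_t$ on $M$ reads $k_t(x,v) = t^{-n}\tilde k(x,v/t,t)$. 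The $\cJ$-condition is then equivalent, via the partial Fourier transform $\widehat{\tilde k}(x,\xi,t)$ in $u$, to the statement that $\widehat{\tilde k}$ vanishes at infinite order at $(\xi,t) = (0,0)$.

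For the inclusion $\supseteq$, take $k \in \cJ$ and compute the partial Fourier transform of $P = \int_0^\infty k_t\,t^{-1-m}\,dt$. The scaling identity $\widehat{k_t}(x,\xi) = \widehat{\tilde k}(x,t\xi,t)$ followed by the substitution $s = t|\xi|$ yields
$$\widehat P(x,\xi) = |\xi|^m \int_0^\infty s^{-1-m}\, \widehat{\tilde k}(x, s\hat\xi, s/|\xi|)\,ds.$$
Integrability at $s\to 0$ is guaranteed by the infinite-order vanishing of $\widehat{\tilde k}$ at $(0,0)$, and at $s\to\infty$ by the Schwartz decay of $\widehat{\tilde k}$ in its middle slot. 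Taylor-expanding $\widehat{\tilde k}(x,s\hat\xi,s/|\xi|)$ in its third argument at $0$ and integrating term by term produces the classical symbol expansion $\widehat P(x,\xi) \sim \sum_{j\ge 0}|\xi|^{m-j}c_j(x,\hat\xi)$ with
$$c_j(x,\hat\xi) = \tfrac{1}{j!}\int_0^\infty s^{j-1-m}\,\partial_t^j\widehat{\tilde k}(x,s\hat\xi,0)\,ds.$$
This shows $P\in \cI_c^m(M,V)$, with principal symbol $c_0$.

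For the reverse inclusion $\subseteq$, given $P\in \cI_c^m(M,V)$ write $P = \varphi_*(P_a) + \kappa$ with $a\sim\sum_{j\ge 0}a_{m-j}$ a classical symbol of order $m$ and $\kappa\in C_c^\infty(M)$. The smoothing piece is handled directly: fix $\rho_0\in C_c^\infty(\R_+^*)$ with $\int_0^\infty \rho_0(t)\,t^{-1-m}\,dt = 1$ and set $\tilde k_\kappa(x,u,t) = t^n\rho_0(t)\kappa(x,tu)$, which lies in $\cS \cap \cJ$ (since $\rho_0$ vanishes near $t=0$) and returns $\kappa$ under the integral. For $P_a$, work by symbol asymptotics: to realise the principal symbol $a_m$, pick $g_0\in \cS([0,\infty))$ with all derivatives vanishing at $0$ and $\int_0^\infty g_0(s)\,s^{-1-m}\,ds = 1$, fix $\rho\in C_c^\infty(\R)$ with $\rho(0)=1$, and define $\widehat{\tilde k}(x,\eta,t) = \rho(t)\,a_m(x,\eta/|\eta|)\,g_0(|\eta|)$ (smooth at $\eta=0$ thanks to the infinite-order vanishing of $g_0$); letting $\tilde k$ be its inverse fiberwise Fourier transform, the forward computation certifies that $\int k_t\,t^{-1-m}\,dt$ is conormal of order $m$ with principal symbol $a_m$. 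Iterating on the error (a symbol of order $m-1$) and summing via a Borel-type construction yields $\tilde k \in \cS(M,V) \cap \cJ(M,V)$ realising $P_a$ modulo a smoothing remainder, which is then absorbed into the $\kappa$-piece. The main obstacle is this Borel summation: controlling the tail of the iteration so that the resulting $\tilde k$ stays in $\cS \cap \cJ$ and preserves the infinite-order vanishing of $\widehat{\tilde k}$ at $(0,0)$ throughout, after which a partition of unity patches the local constructions into the global statement.
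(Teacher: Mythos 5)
Your proposal is correct and follows essentially the same route as the paper: reduce to a vector bundle over $V$, pass to the fiberwise Fourier transform so that the integral $\int_0^{+\infty}\widehat{\tilde k}(x,t\xi,t)\,t^{-1-m}\,dt$ converts the Taylor expansion of $\widehat{\tilde k}$ at $t=0$ into the classical asymptotic expansion of the symbol (with each Taylor coefficient producing a homogeneous term of degree $m-j$), and invoke Borel's theorem for the converse. The extra details you supply --- the substitution $s=t|\xi|$, the separate treatment of the smoothing part, the explicit realisation of each homogeneous symbol term, and the flagged care needed in the Borel summation --- are refinements of the paper's terser one-shot Borel argument, not a different method.
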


\begin{proof}
Me may assume that $M$ is a vector bundle over $V$. We need then to write a symbol $a(u,\xi)\sim \sum a_{m-j}(u,\xi)$ on the dual bundle $M^*$ as an integral $a(u,\xi)=\int _0^{+\infty}g(u,t\xi,t)\,t^{-1-m}\,dt$. Such a $g$ will have a Taylor expansion at $t=0$ of the form $g(u,\xi,t)\sim \sum_{j=0}^{+\infty}t^ng_j(x,\xi)$. Then $$\int _0^{+\infty}g(u,t\xi,t)\,t^{-1-m}\,dt\sim \sum_{j=0}^{+\infty}b_{m-j}(u,\xi)$$ where $b_{m-j}(u,\xi)=\int _0^{+\infty}g_j(u,t\xi)\,t^{n-1-m}\,dt$ is homogeneous in $\xi$ of order $m-j$. Using Borel's theorem, one easily finds a smooth function $g$ whose Taylor expansion $g_j$ yields $b_{m-j}=a_{m-j}$. The theorem follows.
\end{proof}

 \subsubsection{Almost equivariant distributions}
 
 In \cite{vEY1}, Erik van Erp and  Robert Yuncken presented another point of view on pseudo-differential calculus on groupoids. Their construction can be carried to conormal distribution in the following way :
 
 \begin{theorem}
A conormal distribution $P\in \cI_c^m(M,V)$ is a distribution on $M$ with compact support such that there exists a distribution $Q$ on $DNC(M,V)$ given by a smooth family $(P_t)_{t\in \R^*}$, (\ie such that $\langle Q|f\rangle=\int_\R \langle P_t|f_t\rangle\,dt$) which satisfies $P_1=P$ and $\alpha_\lambda Q-\lambda^mQ\in C^\infty (DNC(M,V))$ for every $\lambda\in \R_+^*$.
\end{theorem}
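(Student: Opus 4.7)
The plan is to identify the distributions on $DNC(M,V)$ which are almost $\lambda^m$-equivariant with the conormal distributions on $M$ of order $m$, via a partial Fourier transform in the fibers of the natural vector bundle $DNC(M,V)\to V\times\R$. I will work locally, using a tubular neighborhood to identify $M$ with the total space of the normal bundle $N := \cN_V^M$ and $V$ with the zero section. In the smooth coordinates $(x,U,t)\in V\times N_x\times\R$ on $DNC(M,V)$, the structural map to $M\times\R$ reads $(x,U,t)\mapsto(x,tU,t)$ and the zooming action becomes $\alpha_\lambda(x,U,t)=(x,U/\lambda,\lambda t)$; in particular $DNC(M,V)$ is a vector bundle over $V\times\R$ whose zero section is $V\times\R$ itself.

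For the direct implication, I would pick a total symbol $a(x,\xi)$ of order $m$ representing $P$. The Remark of Section~\ref{sectionDNCtoPDO} produces a canonical smooth function $b(x,\xi,t)$ on $N^*\times\R_+\setminus(V\times\{0\})$, jointly homogeneous of degree $m$ under $(\xi,t)\mapsto(\lambda\xi,\lambda t)$, with $b(x,\xi,1)=a(x,\xi)$. I extend $b$ smoothly to $\tilde b$ on all of $N^*\times\R\setminus(V\times\{0\})$; any two such extensions differ by a smooth function. Then I define $Q$ on $DNC(M,V)$ in the smooth coordinates as the distribution conormal to $V\times\R$ whose partial Fourier transform along the fibers of $DNC(M,V)\to V\times\R$ is $\tilde b$. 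The joint homogeneity of $b$ gives exact $\alpha_\lambda$-equivariance of the piece coming from $t>0$, while the extension across $t\leq 0$ contributes only a smooth remainder; hence $\alpha_\lambda Q - \lambda^m Q\in C^\infty(DNC(M,V))$. Undoing the coordinate change $(x,U,t)\leftrightarrow(x,tU,t)$ on $M\times\R^*$ realizes $Q$ there as a smooth family of conormal distributions $(P_t)_{t\neq 0}$ with $P_1=P$.

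For the converse, suppose $Q$ is a distribution on $DNC(M,V)$ given by a smooth family $(P_t)_{t\in\R^*}$ with $P_1=P$ and $\alpha_\lambda Q - \lambda^m Q\in C^\infty$ for every $\lambda\in\R_+^*$. Taking the partial Fourier transform of $Q$ along the fibers of $DNC(M,V)\to V\times\R$ produces a generalized function $\tilde b(x,\xi,t)$ on $N^*\times\R\setminus(V\times\{0\})$, and the almost-equivariance hypothesis translates into $\tilde b(x,\lambda\xi,\lambda t)-\lambda^m\tilde b(x,\xi,t)\in C^\infty$ for every $\lambda\in\R_+^*$. By the same Remark, this forces $\tilde b$ to be asymptotically homogeneous and its restriction $a:=\tilde b(\cdot,\cdot,1)$ to be a classical symbol of order $m$ on $N^*$. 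Undoing the coordinate change at $t=1$ then identifies $P=P_1$ with the conormal distribution of total symbol $a$, so $P\in\cI_c^m(M,V)$.

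The main technical obstacle is to quantify the almost-equivariance precisely and translate it into the Hörmander-type symbol-class estimates used in the Remark. Concretely, one needs to show (i) that for any $Q$ satisfying the almost-equivariance, the partial Fourier transform $\tilde b$ is a genuine classical symbol of order $m$ with uniform estimates down to $t=0$, and (ii) that conversely any such symbol produces a well-defined distribution on $DNC(M,V)$ with the prescribed equivariance. Once the convention issues involving the density and Jacobian contributions glossed over in Section~\ref{sectionDNCtoPDO} are fixed, both reductions become essentially restatements of the symbol/homogeneous-function correspondence of the Remark, applied in the DNC picture.
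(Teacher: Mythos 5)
Your proposal follows essentially the same route as the paper's proof: reduce to the case where $M$ is (the total space of) the normal bundle over $V$, take the fiberwise Fourier transform so that $Q$ becomes a function $F(x,\xi,t)$ on $N^*\times\R_+$, and observe that the almost-equivariance of $Q$ under $\alpha_\lambda$ is exactly the almost joint homogeneity of $F$ of degree $m$, which by the symbol/homogeneous-function correspondence of the Remark characterizes classical symbols via $a=F(\cdot,\cdot,1)$; the paper's explicit formula $F(x,\xi,t)=\chi(\|\xi\|^2+t^2)\,t^{-m}a(x,\xi/t)$ is precisely your cut-off extension of $b$ across $(\xi,t)=(0,0)$. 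The remaining quantitative step you flag (translating almost-equivariance into Hörmander estimates, and the density/Jacobian bookkeeping) is left at the same level of detail in the paper itself.
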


\begin{proof}
 We may of course assume that $M$ is the total space of a vector bundle $E\to V$ (and $V$ is the $0$-section). As $Q$ has compact support, we may write $\langle Q|f\rangle=\int_\R \langle \widehat {P_t}|\widehat{f_t}\rangle\,dt$ where $\widehat{P_t}$ is a smooth function on $E^*$. The family $(\widehat {P_t})_{t\in \R_+}$ is then a smooth function $F$ on $E^*\times \R_+$ such that, for every $\lambda\in \R_+^*$, the function $(x,\xi,t)\mapsto\lambda^mF(x,\xi,t)-F(x,\lambda x,\lambda t)$ has compact support.
 
 If $F$ is of that form, then $(x,\xi)\mapsto F(x,\xi,1)$ is a symbol; if $a$ is a symbol, just put $F(x,\xi,t)=\chi(\|\xi\|^2+t^2)t^{-m}a(x,\tfrac{\xi}{t})$ for $t\ne 0$ and $F(x,\xi,0)=\chi(\|\xi\|^2)a_m(x,\xi)$.
\end{proof}

 \begin{dmc}{Remark: Equivariant linear forms on $\cJ(M,V)$}
One can modify a little bit this construction. The distributions used here are linear forms on $\cS(M,V)$. Such linear forms can be exactly equivariant for $m\in \N$ - and in this case we obtain only differential operators. If instead we take linear forms only defined on the subspace $\cJ(M,V)\subset \cS(M,V)$ of \cite{DS1}, we probably construct an exactly equivariant family of linear forms on $\cJ(M,V)$ extending any element of $\cI_c^m(M,V)$. 
\end{dmc}

\subsection{Some generalizations}

We will just cite here, without going too much into the details, some more general distributions which were constructed and used in operator algebras.

\subsubsection{More general families of pseudodifferential operators}

One can associate useful distributions to much more general symbols. In \cite{HilsSkMorph} were used symbols of type $(\rho,\delta)$ and the associated pseudodifferential operators. 

Let $\rho,\delta\in [0,1]$. Let $E\to M$ be a Euclidean vector bundle. A symbol of order $m$ and type $(\rho,\delta)$ is a function $a:E\to \C$ such that, in local coordinates, for every multiindices $\alpha,\beta$, and every compact subset $K$ in $M$, there exists $C\in \R_+$ such that \[\Big|\frac{\partial^{|\alpha|+|\beta|}a}{\partial^\alpha x\,\partial ^\beta\xi}(x,\xi)\Big|\le C(\|\xi\|+1)^{m-\rho |\beta|+\delta |\alpha|}\]
for every $x\in K$ and $\xi\in E_x$ (see \cite{Hormanderrho}).

Polyhomogeneous symbols, \ie the ones considered above, are particular cases of symbols of type $(1,0)$.

These symbols of type $(\rho,\delta)$ were used in \cite{HilsSkMorph} in order to construct \emph{holonomy almost invariant} transversally elliptic operators on any foliation, \ie holonomy invariant up to lower order. Restricting to a transversal  this amounts to finding operators on a manifold almost invariant under the action of a (pseudo)group $\Gamma$. Thanks to the work of Connes (\cite{ConnesCycCohtransv}), one may assume that the (tangent) bundle $E$ has an invariant subbundle $F$ which has an invariant euclidean metric as well as the quotient $E/F$. In  \cite{HilsSkMorph} were constructed pseudodifferential operators of order $0$ that ``differentiate more'' along the direction $F$ and were used to construct almost invariant Dirac type operators.

\subsubsection{Inhomogeneous calculus}

Connes-Moscovici (\cite{ConnesMosco1, ConnesMosco2}), in order to write formulae in cyclic cohomology used a more specific and unbounded analogue of \cite{HilsSkMorph}. This was a differential operator which is of second order in the direction tangent to $F$ and of order $1$ in the complementary direction. This falls into a construction of an inhomogeneous pseudodifferential calculus modelled on nilpotent groups studied by many authors -  see the book \cite{BealsG}.

\medskip 
In order to understand better this calculus and the corresponding index map, Choi-Ponge (\cite{ChoiPonge1, ChoiPonge2, ChoiPonge3}) and van Erp-Yuncken (\cite{vEY2}) constructed independently a deformation Lie groupoid of the form $(M\times M\times \R^*)\sqcup \cN\times \{0\}$. 

 Let us briefly describe the very general and nice setting for this inhomogeneous calculus. 

Let $M$ be a smooth manifold and let $\{0\}\subset H^1\subset H^2\subset \ldots\subset H^k=TM$ be a filtration of $TM$ by subbundles. We assume that if $X$ is a smooth section of $H^i$ and $Y$ is a smooth section of $H^j$ then $[X,Y]$ is a section of $H^{i+j}$ (we put $H^\ell=TM$ for $\ell\ge k$). We declare then that a vector field which is a section of $H^i$ is a differential operator of degree $i$. 

Note that the bundle $\gN=\bigoplus_{i=1}^nH^i$ is naturally equipped with a nilpotent Lie algebra structure thanks to the Lie brackets of sections: Let $x\in M$. If $X$ is a smooth section of $H^i$ and $Y$ is a smooth section of $H^j$ then the class of $[X,Y]_x$ in $H_x^{i+j}/H^{i+j-1}_x$ only depends on the class of $X_x$ in $H_x^{i}/H^{i-1}_x$ and of $Y_x$ in $H_x^{j}/H^{j-1}_x$. One then has an associated nilpotent Lie group bundle $\cN$ over $M$, which as a set in $\gN$ and the product is constructed thanks to the Baker-Campbell-Hausdorff formula. An important feature of this is that there is a natural action $\alpha$ of the group $\R_+^*$ on $\gN$ and $\cN$ given by $\alpha_\lambda(X)=\lambda^i X$ if $X\in H^i$.

Constructions as those explained in section \ref{sectionDNCtoPDO} (should) naturally allow to recover the associated inhomogeneous pseudodifferential calculus out of this deformation groupoid \cite{vEY2}.

Mohsen \cite{Omar}, gave a very nice construction of this groupoid based on deformations to the normal cone. We will come back to Mohsen's construction of this deformation groupoid in section \ref{sectionOmar}.

\subsubsection{Fourier integral operators}

A ``classical'' family of operators generalising pseudodifferential calculus is that of Fourier integral operators (\cf \cite{Hormander4}). These were constructed by H\"ormander (\cite{Hormander0}) in order to better understand the propagation of singularities for some strictly hyperbolic operators as the wave equation. These operators were studied by several authors and were very useful in local analysis \cite{DuisterHorm, Hormandera, Hormanderb, Hormander4, Duister, Shubin, Treves2}. Recently, an index theory based on Fourier integral operators was developed (\cf \cite{SSS}).

\medskip In \cite{LescVas1}, Lescure and Vassout show how to define Fourier integral operators on Lie groupoids. Fourier integral operators with proper support define multipliers of the convolution algebra $C_c^\infty(G)$, those of order $0$ define multipliers of the $C^*$-algebra of the groupoid, and negative order ones define elements of the $C^*$-algebra. %It is natural to hope for a description of those in the spirit of section \ref{sectionDNCtoPDO} above. \blue{Mettre ce paragraphe en toute fin?}

\section{Constructions based on Lie groupoids and their deformations}

We briefly discuss here some constructions where deformation groupoids are naturally obtained and used. Such groupoids  give a geometric description of important pseudodifferential calculi. Some others are used to construct elements of Kasparov's $KK$-theory (\cite{Kasparov1980}) such as index maps, or Poincar\'e duality.

\subsection{The associated index map}

Let $G\rra M$ be a Lie groupoid and $\Gamma\rra V$ a sub-Lie groupoid, \ie a submanifold and a subgroupoid of $G$. The groupoid $DNC(G,\Gamma)$ when restricted to the interval $[0,1]$ gives rise to a diagram:
\[\xymatrix{0\ar[r]&C^*(G\times (0,1])\ar[r]&C^*(DNC(G,\Gamma)_{[0,1]}\ar[r]^{\qquad ev_0}\ar[d]_{ev_1}&C^*(\cN_\Gamma^G)\ar[r]\ar@{.>}[dl]^{\partial_\Gamma^G}&0\\
&&C^*(G)}\]
where the top line is exact - at least in the full $C^*$-algebra level.

As $C^*(G\times (0,1])$ is contractible, the map $ev_0$ is invertible in $E$-theory of Connes-Higson (\cite{ConHig}) - and in $KK$-theory if this exact sequence admits a completely positif splitting - which is the case if the groupoid $\cN_\Gamma^G$ is amenable. 

We thus obtain an index element $\partial_\Gamma^G=[ev_1]\otimes [ev_0]^{-1}\in E(C^*(\cN_\Gamma^G),C^*(G))$.

Let us see some examples:

\subsubsection{The \enquote{Dirac element} of a Lie group}

Consider an inclusion $H\subset G$ of Lie groups. Note that the groupoid $\cN_H^G$ is actually a group. This group is immediately seen to be the semidirect product $H\ltimes (\gG/\gH)$ - where $H$ acts on the lie algebra $\gG$ of $G$ via the adjoint representation of $G$ and fixes the Lie algebra $\gH$ of $H$.

Assume $G$ is a (almost) connected Lie group and $K$ is its maximal compact subgroup. The $K$ theory of the group $C^*(\cN_K^G)$ is a twisted $K$ theory of the group $K$  and the map $\partial_K^G:K_0(C^*(\cN_K^G))\to C^*_r(G)$ identifies with the \enquote{Dirac element} -- \ie the Connes-Kasparov map.

\subsubsection{Foliation and shriek map for immersions}

Let $(M_1,F_1)$ and $(M_2,F_2)$ be smooth (regular) foliations. In \cite{HilsSkMorph} is considered a notion of maps between leaf spaces $f:M_1/F_1\to M_2/F_2$. The goal of that paper is to construct wrong way functoriality maps $f!:K(C^*(M_1,F_1))\to K(C^*(M_2,F_2))$ generalizing  constructions of \cite{CoSk}.

As in \cite{CoSk}, writing $f$ as a composition $p\circ i$ where $i:M_1/F_1\to M_1/F_1\times M_2/F_2$ is (somewhat loosely speaking) $\ell\mapsto (\ell,f(\ell))$ - where $\ell$ is a leaf of $(M_1,F_1)$ and $p$ is the projection $M_1/F_1\times M_2/F_2\to M_2/F_2$ the problem is reduced to the case of immersions and submersions.

\medskip 
Following Connes' construction of the tangent groupoid, a deformation groupoid was used in \cite{HilsSkMorph} in order to construct the wrong way functoriality map for immersions between spaces of leaves, in the following way:
\begin{itemize} \item Using a Morita equivalence, one may reduce to transversals in order to understand an immersion $M_1/F_1\to M_2/F_2$ to be an inclusion $f:G_1\hookrightarrow G_2$ where $M_1$ is a submanifold of $M_2$ which is saturated and $G_1$ is the restriction of $G_2$ to $M_1$ - \ie for $\gamma\in G_2$ we have the equivalences: $s(\gamma)\in M_1\iff r(\gamma)\in M_1\iff \gamma\in G_1$. 

\item Then, the DNC construction is used in order to obtain a wrong way functoriality element $f!\in E(C^*(G_1),C^*(G_2))$
(\footnote{In  \cite{HilsSkMorph} this element is just a morphism of $K$-groups since $E$-theory of Connes-Higson was defined later}). This element is the Kasparov product $[th]\otimes \partial _{G_1}^{G_2}$ of a Thom isomorphism element $[th]\in KK(C^*(G_1),C^*(\cN_{G_1}^{G_2}))$ with the index element $\partial _{G_1}^{G_2}\in E(C^*(\cN_{G_1}^{G_2}),C^*(G_2))$. 
\end{itemize}

\begin{remark}
In order to construct the Thom element $[th]$ in  \cite{HilsSkMorph} one of course has to assume a $K$-orientation. Moreover, is used the fact that the groupoid $G_1$ acts naturally on the normal bundle $\cN_{M_1}^{M_2}$.
\end{remark}

\begin{question}
Can one construct the Thom element  when the groupoid $G_1$ does not act on the bundle $\cN_{M_1}^{M_2}$? What is the right condition of $K$-orientation for $\cN_{G_1}^{G_2}$?
\end{question}

\begin{remark} It is mentioned also in \cite{HilsSkMorph} that one could use a deformation groupoid to construct $f!$ for submersions.
\end{remark}

\subsubsection{On the computation of the index map in some cases}

In \cite{DS4},  the index map  index element $\partial_\Gamma^G=[ev_1]\otimes [ev_0]^{-1}\in E(C^*(\cN_\Gamma^G),C^*(G))$ associated to the inclusion of groupoids is computed in some situations.
In particular, when $\Gamma $ is just a space $V\subset M$,  the $C^*$-algebra of the groupoid $\cN_V^G$ has the same $K$ theory as the space $\cN_V^G$. We have an embedding $j:\cN_V^G\to \gA G$ of this space, via a tubular neighborhood construction, as an open subset of the Lie algebroid of $G$. The index $\partial_\Gamma^G$ is the composition:
$$K_*(C^*(\cN_V^G))\simeq K_*(C_0(\cN_V^G))\overset {j}{\lra}K_*(C_0(\gA G))\overset {\ind _G}{\lra}K_0(C^*(G))$$
(and this is actually true for $KK$-elements instead of morphisms of $K$-theory).

\bigskip
Also, one can compare it to the connecting map $\tilde \partial_\Gamma^G$ of the exact sequence $$0\to C^*(\mathring G)\to C^*(SBlup(G,\Gamma))\to C^*(S\cN_\Gamma^G)\to 0,$$
associated with the open saturated subset $\mathring{M}=M\setminus V\subset SBlup(M,V)$. Here, $\mathring{G}$ is the groupoid $G_{\mathring{M}}^{\mathring{M}}$.

We then have a commutative diagram (\footnote{The arrows are in fact $E$-theory elements or even $KK$-theory elements.}) $$\xymatrix{K_j(C^*(\mathring G))\ar[d]_j \ar[r]^{\tilde \partial _\Gamma^G\ \ }&K_{j+1}(C^*(S\cN_\Gamma^G))\ar[d]^{th}\\
K_j(C^*(G))\ar[r]^{\partial _\Gamma^G}&K_{j}(C^*(\cN_\Gamma^G))}$$ where $th$ is a map based on the Connes analogue of the Thom isomorphism (\cf \cite{ConnesThom}, see also \cite{FaSkThom} for its construction in Kasparov's bivariant groups). 

It is then easily seen that:\begin{itemize}
\item if no $G$ orbit is contained in $V$, then the inclusion $C^*(\mathring{G})\to C^*(G)$ is a Morita equivalence and therefore $j:K_*(C^*(\mathring{G}))\to K_*(C^*(G))$ is an isomorphism;
\item if for every $x\in V$, the tangent to the $G$ orbit through $x$, \ie the image by the anchor map $\varrho_x:\gA G_x\to T_xM$ is not contained in $T_xV$, then the Thom element $th$ is also an isomorphism.
\end{itemize}

\subsubsection{Full index}

Another natural question, involving blowup  groupoids, appears: Let $P$ be an elliptic operator on $SBlup_{r,s}(G,\Gamma)$. When is it invertible modulo $C^*(\mathring{G})$? If this is the case, can one compute its index as an element in $K_*(C^*(\mathring{G}))$? There is a particular interest when $\mathring{G}=\mathring M\times \mathring{M}$, in which case the index is in $\Z$.

We have a commutative diagram
$$\xymatrix{
&0\ar[d]&0\ar[d]&0\ar[d]\\
0\ar[r] &C^*(\mathring{G})\ar[r]\ar[d]& \Psi^*(\mathring{G})\ar[r]\ar[d] &C_0(\bS^*\gA \mathring{G})\ar[r]\ar[d]& 0\\
0\ar[r] &C^*(SBlup_{r,s}(G,\Gamma))\ar[r]\ar[d] & \Psi^*(SBlup_{r,s}(G,\Gamma))\ar[r]^\sigma\ar[d]^q &C_0(\bS^*\gA SBlup_{r,s}(G,\Gamma))\ar[d]\ar[r]& 0\\
0\ar[r] &C^*(S\cN _\Gamma^G)\ar[r]\ar[d]& \Psi^*(S\cN _\Gamma^G)\ar[r]\ar[d] &C_0(\bS^*\gA S\cN_\Gamma^G)\ar[r]\ar[d]& 0\\
&0&0&0}
$$
where lines and columns are exact. \begin{itemize}
\item The columns represent the exact sequence corresponding to the partition of the groupoid $SBlup_{r,s}(G,\Gamma)$ into the open subgroupoid $\mathring{G}$ and the closed subgroupoid $S\cN_\Gamma^G$ at the level of $C^*$-algebras, order $0$ pseudodifferential operators and principal - $0$-homogeneous symbol. 
\item The lines are the exact sequences of zero order pseudodifferential operators of the groupoids $\mathring{G}$,  $SBlup_{r,s}(G,\Gamma)$ and $S\cN_\Gamma^G$ (see theorem \ref{thmExacSeq}).
\end{itemize}
It follows that a pseudodifferential operator \ie an element  $P\in \Psi^*(SBlup_{r,s}(G,\Gamma))$ is invertible modulo $C^*(\mathring{G})$ if and only if its \emph{classical symbol} $\sigma(P)$ and its \emph{non commutative symbol}, \ie its restriction $q(P)$ to the \enquote{singular part} $S\cN_\Gamma^G$ are both invertible.

\medskip In other words, we have a \emph{full symbol algebra} $\Sigma_\Gamma^G=C_0(\bS^*\gA SBlup_{r,s}(G,\Gamma))\times_{C_0(\bS^*\gA S\cN_\Gamma^G)}\Psi^*(S\cN _\Gamma^G)$ and an exact sequence $$0\to C^*(\mathring{G})\to \Psi^*(SBlup_{r,s}(G,\Gamma))\to \Sigma_\Gamma^G\to 0.$$
One can compute in some cases the $K$-theory of $\Sigma_\Gamma^G$ and the connecting map. For instance, if - as above $\Gamma$ is just a submanifold $V\subset M$ and if we assume that the anchor map $\varrho_x:\gA G_x\to T_xM$ is not contained in $T_xV$, then $\Sigma_V^G$ is $K$-equivalent to the algebra of pseudodifferential operators on $G$ whose symbol is \enquote{trivial} when restricted to $V$ -- \ie a function on $V$, and the index map is the restriction to this subalgebra of $\Psi^*(G)$ to the index map of $G$.

\subsection{Groupoids using deformation constructions}

Lie groupoids are useful in defining natural pseudodifferential calculi. In several $K$-theoretic constructions, many authors have introduced interesting groupoids using various techniques: gluing, integration of algebroids... Recognizing some of these groupoids as deformations or blowups often simplifies their construction, may help their understanding and give a geometric insight on their properties.

In this section we outline some of these natural and useful constructions of deformation or blowup Lie groupoids.

\subsubsection{Pseudodifferential calculi on singular manifolds}

Let $M$ be a singular manifold. This can be a manifold with boundary, or with corners, or even a stratified manifold. Many natural pseudodifferential calculi were constructed by analysts -- especially in the school of Richard Melrose -- in order to take into account, sometimes in a very fine way, the behavior of the operators near the boundary (\cf \cite{Melbook, MM, Mazzeo91, MM2, MelPia}). 

Some of these calculi were already constructed using blowup constructions. Some others just putting some conditions on the riemannian metric in the regular part $\mathring{M}\subset M$ degenerating near the boundary. In many cases, when this metric is complete, this metric actually corresponds to an algebroid. More precisely, the space of bounded vector fields with respect to this metric is the module of sections of a Lie algebroid on $M$. It follows from \cite{Debord} that this algebroid integrates to a Lie groupoid $G\rra M$. 

Of course knowing this groupoid will certainly not solve at once all the questions for which the corresponding calculus was constructed! It may however help understanding some of its properties: the decomposition of $C^*(G)$ into ideals that can often be seen geometrically can simplify the study of conditions for Fredholmness of an associated (pseudo)differential operator; it is  also relevant for various index computations...

\bigskip Let us outline specific examples.

\paragraph{The groupoid of the $b$-calculus.} Let $M$ be manifold with boundary. Melrose constructs the $b$ space which is the blow up of $M\times M$ along its corner $\partial M\times \partial M$ (\cf \cite{Mel2, Mel1, MelPia, Melbook}). The pseudodifferential operators of the corresponding $b$-calculus consists of operators which are distributions on the $b$-space that are conormal along the diagonal $M$ and have a specific decay near the boundary components $M\times \partial M$ and $\partial M\times M$.

\medskip Monthubert (\cite{Month1, Month2}) constructed the associated $b$-groupoid which is nothing else than the dense open subspace $G_b=SBlup_{r,s}(M\times M,\partial M\times \partial M)$ of the $b$-space $SBlup(M\times M,\partial M\times \partial M)$.

\medskip Let us mention that in fact all these constructions were also performed in the more general case of manifolds with corners.

\paragraph{Fibered corners.}  We restrict again to the case of a manifold with boundary, although the constructions below extend to more general settings of manifolds with corners. 

Let $M$ be manifold with boundary $\partial M$ and let $p:\partial M\to B$ be a fibration. Mazzeo (\cite{Mazzeo91}) studied the \emph{edge calculus} in this situation. This corresponds to the blowup $SBlup(M\times M,\partial M\times_B \partial M)$ and of course to the corresponding groupoid $G_e=SBlup_{r,s}(M\times M,\partial M\times _B\partial M)$.

Later, (\cite{MM2}) Mazzeo and Melrose introduced and studied in the same situation the $\Phi$ calculus which corresponds to the algebroid of vector fields that are tangent to the fibers at the boundary but also whose derivative is tangent to $\partial M$: these are vector fields of the form $X+tY+t^2N$ - where $t$ is a defining function of the boundary, $X$ is a vector field along the fibration (extended near the boundary), $Y$ is tangent to the boundary and $N$ is normal to the boundary. Piazza and Zenobi (\cite{PiaZen}) actually realized that the groupoid constructed in \cite{DLR} integrating this algebroid can be obtained via a double blowup construction. See also \cite{Yama} for topological aspects of indices in this context.

\begin{question} \label{Question2}
A natural question is also to try to understand the non complete case too in terms of deformation groupoids...
\end{question}

\subsubsection{Inhomogeneous pseudodifferential calculus}
\label{sectionOmar}

The ``inhomogeneous'' pseudodifferential calculus (also called ``filtered'' or ``Carnot''), deals with manifolds $M$ whose tangent bundle is endowed with a filtration $(H_i)_{0\leqslant 1\leqslant k}$ (with $H_0=0$ and $H_k=TM$) satisfying $[\Gamma(H_i),\Gamma(H_j)]\subset \Gamma(H_{i+j})]$. A natural pseudodifferential calculus has been constructed in this framework, generalizing the case of contact manifolds (\cf \cite{BealsG}). 
In this sub-elliptic or Carnot calculus, the vector fields that are sections of the bundle $ H_i $ are considered as differential operators of order $i$. 

A deformation groupoid taking into account this inhomogeneous calculus was constructed independently in (\cite{Ponge1, ChoiPonge1, ChoiPonge2, ChoiPonge3}) and \cite{VanErpAS1, JuvE, vEY2}.   Both these constructions are rather technical and based on higher jets. This groupoid plays the role of Connes' tangent groupoid in this setting. It allows to recover the inhomogeneous pseudodifferential calculus (\cf \cite{vEY2}).

Omar Mohsen presents in \cite{Omar} a very elegant construction of this deformation groupoid in terms of successive deformations to the normal cone. In the case where there is only one sub-bundle $H\subset TM$, one just considers the inclusion of $H\times\{0\}$ into Connes' tangent groupoid  $(M\times M)\times \R^*\sqcup TM\times \{0\}$.  The crucial fact that the object built is canonically a groupoid is clear in this construction - while it leads to relatively sophisticated computations in the works cited above. The general case is treated by induction.

In addition, this construction has the advantage of being very flexible and generalizing immediately, for example in the context of an inhomogeneous a pseudodifferential calculus transverse to a foliation - as the one appearing in the work of Connes-Moscovici (\cite{ConnesMosco1, ConnesMosco2}).

\subsubsection{Poincar\'e dual of a stratified manifold}

\paragraph{K-duality.} G.G. Kasparov in \cite{Kasparov1980} defines a formal $KK$-duality of $C^*$-algebras. If $A$ and $B$ are $K$-dual $C^*$-algebras, the $K$-homology of $A$ is isomorphic to the $K$-theory of $B$. 

\medskip Let $M$ be a smooth compact manifold. The algebra $C(M)$ has naturally a $K$-dual which is $C_0(T^*M)$ (\cf \cite{Kasparov1975, Kasparov1980, CoSk}).  The corresponding duality map associates to the $K$-homology class of an elliptic (pseudo)differential the $K$-theory class of its symbol.

%Manifolds with conical singularity. The $b$-calculus groupoid. Melrose $Blup(M\times M;\partial M\times \partial M)$; Monthubert $G_b=Blup_{r,s}(M\times M;\partial M\times \partial M)$. 

Several generalisations to manifolds with singularities have been given by various authors: manifolds with boundary (\cite{CoSk}), non Hausdorff manifolds (\cite{KaSk1}), manifolds with conic singularity (\cite{DebLesc, CRLM}), stratified manifolds (\cite{DebLesc2}). Many of them use naturally Lie groupoids.

\paragraph{Manifolds with a conic singularity.} Let us outline the construction of \cite{DebLesc}.

Let $M$ be a compact manifold with boundary $\partial M$. Denote by $\mathring{M}$ the open subset $M\setminus \partial M$. The one point compactification ${M}_+$ of $\mathring{M}$ is the quotient of $M$ by the equivalence relation which identifies all the points of the boundary $\partial M$. It is a \emph{manifold with conic singularity}.

Let $G_b=Blup_{r,s}(M\times M,\partial M\times \partial M)=\mathring{M}\times \mathring{M}\sqcup \partial M\times \partial M\times \R_+^*$ be the groupoid of the $b$-calculus of the manifold with boundary $M$. The Poincar\'e dual of $M_+$ constructed in  \cite{DebLesc} is a closed subgroupoid $\cG$ of the ``adiabatic'' groupoid $DNC(G_b,G_b^{(0)})$ of $G_b$: it is $\cG=DNC(G_b,G_b^{(0)})\setminus \mathring{M}\times \mathring{M}\times \R_+^*$ \cite{DebLesc}. Note that $\cG$ is the union of the algebroid $\gA G_b=TM=T\mathring M\sqcup T\partial M\times \R$ of $G_b$ with $\partial M\times \partial M\times \R^2$.

\emph The \emph{Poincar\'e duality} element is an element $\psi\in KK(C(M_+)\otimes C^*(\cG),\C)$. It is obtained as follows: \begin{itemize}
\item first $C(M_+)$ sits naturally in the center of the multiplier algebra of $C^*(\cG)$, by extending in a unital way the map $C_0(\mathring M)\to \cM(C_0(T^*\mathring M))=\cM(C^*(TM))$. We thus have a morphism of $C^*$-algebras $m:C(M_+)\otimes C^*(\cG)\to C^*(\cG)$.
\item As $C^*(\mathring{M}\times \mathring{M}\times \R_+^*)\simeq \cK\otimes \C_0(\R_+^*)$, the extension $$0\to C^*(\mathring{M}\times \mathring{M}\times \R_+^*)\to C^*(DNC(G_b,G_b^{(0)}))\to C^*(\cG)\to 0$$ gives rise to an element $d\in KK^1(C^*(\cG),C_0(\R_+^*))=KK(C^*(\cG),\C)$.
\end{itemize}
Put then $\psi=m^*(d)$.

\begin{question}\label{Question3}
It would be nice to understand this Poincar\'e duality by describing as precisely as possible which operators on $\mathring{M}_+$ correspond to symbols on $C^*(\cG)$. In particular, what is the symbol class of a Fuchs type operator studied in \cite{Lescure1}? This question is certainly linked with question \ref{Question2}.
\end{question}

\paragraph{Stratified manifolds.} One can generalize immediately this construction to manifolds with a fibered boundary: given a fibration $p:\partial M\to B$ one can form the space $M/\sim$,  where $\sim $ is the equivalence relation on $M$ given by $$x\sim y\iff \begin{cases} x=y\ \ $or$\\
x,y\in \partial M \ $and$\ p(x)=p(y).\end{cases}$$ 
One just replaces the groupoid $G_b$ by the groupoid $G_e$ of the edge calculus or by the groupoid $G_\Phi$  of the  $\Phi $ calculus. 

This construction was extended in \cite{DebLesc2}, using an induction process, to describe in a similar way the Poincar\'e dual of general stratified manifolds $X$. The construction can in fact be obtained by use of several blowups: one blows up inductively all the strata to obtain a groupoid $G_X$ generalizing $G_b$; then, one uses as above the adiabatic deformation of $G_X$ in order to construct the dual groupoid $\cG_X=DNC(G_X,G_X^{(0)})\setminus \mathring{X}\times  \mathring{X}\times \R_+^*$ and the Poincar\'e duality element $\psi_X\in KK(C(X)\otimes C^*(\cG_X),\C)$. Details will appear in \cite{DebordPrep}.

\section{Related topics and further questions}

In this section, we examine some topics that are not a priori based on Lie groupoids, but  are actually linked to our discussion.  

\subsection{Relation to Roe algebras}

Starting from a finite propagation speed principle for differential operators of order one \cite{Chernoff}, John Roe developed a very beautiful theory of \emph{coarse} spaces and algebras \cite{Roebook}. The $K$-theory of Roe algebras have been used as a receptacle for various index problems \cite{HigRoe}. 

We will not develop here this theory. Let us just outline some links with groupoids that have been made in the literature.
\begin{itemize}
\item In \cite{STY}, a groupoid is constructed out of a coarse space and it is shown that the Roe $C^*$-algebra of locally compact functions with finite propagation on this coarse space is the $C^*$-algebra of this groupoid
\item Out of a Lie groupoid one constructs Roe type algebras (see \eg \cite{BenRoy}). A natural example is the case of a covering space $\widetilde M\to M$ with group $\Gamma$. In that case, one may also define naturally the index with values in the $K$-theory $C^*(\Gamma)$ for $\Gamma$-invariant elliptic operators on $\widetilde M$ using coarse techniques. In particular, there is an exact sequence of Roe algebras (\cite{HigRoeI, HigRoeII, HigRoeIII, PiaSch}) $$0\to \cC^*(\widetilde M)^\Gamma\longrightarrow \cD^*(\widetilde M)^\Gamma \longrightarrow \cD^*(\widetilde M)^\Gamma/\cC^*(\widetilde M)^\Gamma\to 0.$$
The algebra $\cC^*(\widetilde M)^\Gamma$ is Morita equivalent to $C^*(\Gamma)$ and that of $ \cD^*(\widetilde M)^\Gamma/\cC^*(\widetilde M)^\Gamma$ is the $K$-homology of $M$ (Paschke duality -- \cite{Paschke}). 

\item In this precise case, Zenobi (\cite{Vito}) identified the $K$-theory exact sequence of the Higson-Roe sequence with the adiabatic groupoid exact sequence of the groupoid $(\widetilde M\times \widetilde M)/\Gamma \rra M$.  
\end{itemize}

\begin{question}
How far can one push this parallel between groupoid $C^*$-algebras and Roe algebras?
\end{question}

\subsection{Singular foliations and \enquote{singular Lie groupoids}}

Let us just say a few words on ``very singular Lie groupoids'' associated with singular foliations in \cite{AndrSk1}. 

A singular foliation on a compact manifold $M$ is a  submodule $\cF$ of the $C^\infty(M)$ module of vector fields $\Gamma(TM)$ which is finitely generated and \emph{involutive} -- \ie closed under Lie brackets: $[\cF,\cF]\subset \cF$.

The module $\cF$ tells us which differential operators are ``longitudinal''. In  \cite{AndrSk1} is constructed the holonomy groupoid and the $C^*$-algebra of such a singular foliation. In   \cite{AndrSk2} is also constructed a pseudodifferential calculus and an analytic index for a singular foliation as well as a deformation holonomy groupoid, which gives rise to an alternate way of defining the analytic index.

\begin{question}
Let $(M,\cF)$ be a singular foliation in the sense of  \cite{AndrSk1}. It is sometimes possible by blowing up singular leaves to obtain a less singular foliation: one for which the holonomy groupoid is a Lie groupoid. 

For instance, the singular foliation of $\R^3$ whose leaves are the spheres of center $0$ -- given by the action of $SO(3)$ -- has a singular holonomy groupoid $\{(x,y)\in \R^3\setminus \{0\};\ \|x\|=\|y\|\}\sqcup \{0\}\times SO(3)$. Blowing up the singular leaf $\{0\}$, we obtain the regular foliation on $\R_+\times \bS^2$ with leaves $\{r\}\times \bS^2$.

This poses several questions

\begin{enumerate}
\item How general/canonical can this procedure be? 

\item When such a blowup is possible, what is the precise relation between the $C^*$-algebra of the foliation in the sense of \cite{AndrSk1} and that of the Lie groupoid obtained by blowing up?

\end{enumerate}
\end{question}

\begin{question}
The holonomy groupoid $Hol(M,\cF)$ of a singular foliation is a ``bad'' topological space. On the other hand, there is a natural notion of a smooth map $V\to Hol(M,\cF)$ -- and also of a smooth submersion -- for a manifold $V$ (this notion is called a \emph{bisubmersion} in \cite{AndrSk1}). Is there a natural structure to express this in a functorial way?
\end{question}

%\section{Questions}

\subsection{Computations using cyclic cohomology }

A classical way to make $K$-theoretic computations for spaces is to use the Chern isomorphism with (co)homology. In the case of manifolds, one naturally uses the de Rham cohomology. The de Rham cohomology extends to the noncommutative setting thanks to Connes' cyclic cohomology. On the other hand, de Rham cohomology uses differentiation and is not well suited for continuous functions, but rather smooth functions. So, starting with Connes (\cf \cite{ConnesCycCohtransv, ConnesNCG}) and then many others, one constructs natural cyclic cocycles defined on the algebra $C_c^\infty(G)$ of smooth functions with compact support on a Lie groupoid $G$. Then one encounters a difficult question: extend these cyclic cocycles to a larger algebra that has the same $K$-theory as $C^*(G)$. Such an extension was performed in \cite{ConnesCycCohtransv} using a notion of $n$-traces - which are \enquote{well behaved cyclic cocycles}.

\begin{question}
Extend natural cocycles so that they pair with the $K$-theory of the $C^*$-algebra of the groupoid and compute this pairing.
\end{question}

\subsection{Behaviour of the resolvent of an elliptic operator}

Let us start with say a positive laplacian $\Delta$ on a groupoid $G$ with compact $G^{(0)}$. As $\Delta $ is elliptic, self adjoint and positive, the operator $1+\Delta $ is invertible and its inverse is in $C^*(G)$ (\cf \cite{Vassout}). 

\begin{question}
What is the behavior of $(1+\Delta)^{-1}$? Is it a smooth function outside $G^{(0)}$? What kind of decay at infinity does it have? 
\end{question}

Just a few cases have been worked out (\cf \cite{Melbook, MM}). 

\smallskip Let us make a comment about these problems. The elements of $C^*_r(G)$ define distributions on $G$. In other words there is an injective map $C^*_r(G)\to C^{-\infty}(G)$. So it is a natural question to ask what kind of distributions are elements like $(1+\Delta)^{-1}$. The distribution associated with an element of $C^*(G)$ only depends on its image in $C^*_r(G)$. So that these problems a priori concern the reduced $C^*$-algebra of $G$.

%\medskip These problems lead us to ask several questions.
%\begin{question}
%What can one say about the smallest subalgebra of $C_r^*(G)$, stable under holomorphic calculus and containing $C_c^\infty(G)$?  \orange{Kevin ???}
%\end{question}
%
%\begin{question}
%There are cases where, in order to make computations using cyclic cohomology,  it is suitable to look for subalgebras of $C^*_r(G)$ that are not stable under holomorphic calculus but still have the same $K$-theory. To that end one may use ideas of Bost (\cite{Bost}).
%\end{question}

\subsection{Relations with the Boutet de Monvel calculus}

This was a motivation for us from the beginning. 

Let us very briefly say a few words on the Boutet de Monvel calculus. Details can be found in \cite{MB1, MB2, Grubb, Schrohe}.

Let $M$ be a manifold with boundary. We consider $M$ as included in a manifold $\widetilde M$ without boundary in which a smooth hypersurface $\partial M$ of $\widetilde M$ separates $\widetilde M$ into two open subsets $\mathring M$ and $M_-$.

\medskip Denote by $\chi_M$ the characteristic function of $M$. Boutet de Monvel defines:

\begin{definition}
 A pseudodifferential operator $\Phi$ (with compact support) on $\widetilde M$ is said to have the \emph{transmission property} if for every smooth function $\tilde f$ on $\widetilde M$,  then $\Phi(\chi_M \tilde f)$ coincides on $\ronde M$ with a smooth function on $\widetilde M$ (\footnote{We actually have to assume that the same holds also for the adjoint of $\Phi$.}).
\end{definition}

Of course, a smoothing operator satisfies the transmission property, and thus this property can be described in terms of the (restriction to $\partial M$ of the) total symbol of $\Phi$. This condition was explicitly computed in local coordinates (see \cite{MB1, MB2, Grubb, Schrohe}).

\medskip 
Assume that $\Phi$ satisfies the transmission property, and let $\Phi_+(f)$ be the restriction to $M$ of the smooth function which coincides with $\Phi(\chi_M \tilde f)$ on $\ronde M$.

\medskip 
The  operators $\Phi_+$, \emph{do not form an algebra} since $\Phi_+\Psi_+\ne (\Phi\Psi)_+$, and
the difference $\Phi_+\Psi_+- (\Phi\Psi)_+$ is not a pseudodifferential operator. On the other hand, this difference belongs to a new class of operators -- described again precisely in local coordinates -- called \emph{singular Green} operators.

\medskip%\item 
The set of operators of the form $\Phi_++S$ where $P$ is a pseudodifferential operator with the transmission property and $S$ a singular Green operator is an algebra. Call it $\cP_{BM}(M)$.

\medskip Boutet de Monvel moreover defines \emph{singular Poisson} (or \emph{Potential}) operators mapping functions on $\partial M$ to functions on $M$ and  \emph{singular Trace} operators which map functions on $M$ to functions on $\partial M$. Singular Poisson operators and singular Trace operators are adjoint of each other. 

\bigskip They form bimodules yielding  a Morita equivalence  between singular Green operators on $M$ and ordinary pseudodifferential operators on its boundary.

We thus obtain the \emph{Boutet de Monvel algebra} which consists of matrices of the form $\begin{pmatrix} \Phi_++S&P\\T&Q
\end{pmatrix}$  where \begin{itemize}
\item $\Phi$ is a pseudodifferential operator on $\widetilde M$ with the transmission property, and $\Phi_+$ the corresponding operator on smooth functions on $M$;
\item $S$ is a singular Green operator acting on $M$;
\item $P$ is a singular Poisson operator mapping functions on $\partial M$ to functions on $M$;
\item $T$  is a singular trace operator mapping functions on $M$ to functions on $\partial M$;
\item $Q$ is a pseudodifferential operator on $\partial M$.
\end{itemize}

All these constructions are generalized to the case of families of manifolds and more generally to a Lie groupoid $G\rra M$ on a manifold with boundary $M$ assuming that $G$ is \emph{transverse} to the boundary $M$ (\cf \cite{MeloSchSch, LoMelo, Bohlen2, Bohlen1}).

\bigskip When taking the closure of the \emph{bounded} singular Green operators, we find an exact sequence $$0\to \cK(L^2(M))\longrightarrow {\mathrm{Green}}\overset{\sigma_g}{\longrightarrow} \Sigma^G_M\to0,\eqno{(\mathrm{Green})}$$ where $\sigma_M^G$ is the \emph{non commutative symbol} of singular Green operators with values in the algebra $\Sigma_M=C_0(S^*\partial M)\otimes \cK$. This exact sequence can be compared with the exact sequence $$0\to \cK(L^2(\partial M))\longrightarrow \Psi^*(\partial M)\overset{\sigma_{\partial M}}{\longrightarrow} C_0(S^*\partial M)\to0,\eqno{(\Psi_{\partial M})}$$ of pseudodifferential operators on $\partial M$. Bounded singular Poisson operators and  singular trace operators form bimodules yielding a Morita equivalence of these sequences. 

\medskip
Now, the group $\R_+^*$ naturally acts on Connes tangent groupoid $T\partial M\times \{0\}\sqcup (\partial M\times \partial M)\times \R_+^*$ of the manifold  $\partial M$ via the zooming action. The corresponding crossed product groupoid $\cG$ gives naturally rise to an exact sequence  $$0\to \cK(L^2(\partial M)\times \R_+^*))\longrightarrow C^*(\cG)\overset{\ev_0}{\longrightarrow} C_0(T^*\partial M)\rtimes \R_+^*\to0.\eqno{(\cG)}$$
Note that $C_0(S^*\partial M)\otimes \cK$ naturally sits in $C_0(T^*\partial M)\rtimes \R_+^*$ as an ideal. In \cite{AMMS}, the exact sequence $(\cG)$ restricted to this ideal, was shown to  coincide with (Green) - by showing that they define the same $KK$-element and then using Voiculescu's theorem \cite{Voicu}.

This construction was generalized in  \cite{DS1}, where for any Lie groupoid $G\rra V$, is directly constructed a (sub)-Morita equivalence relating the pseudo-differential exact sequence $$0\to C^*(G)\longrightarrow \Psi^*(G)\overset{\sigma_{G}}{\longrightarrow} C_0(S^*\gA G)\to0,\eqno{(\Psi_{G})}$$  of the groupoid $G$ and the exact sequence $$0\to C^*(G)\otimes \cK(\L^2(\R_+^*))\longrightarrow C^*(\cG_G)\overset{\ev_0}{\longrightarrow} C_0(\gA^*G)\rtimes \R_+^*\to0,\eqno{(\cG_G)}$$ where $\cG_G$ is the \enquote{gauge adiabatic groupoid} obtained as the crossed product by the natural scaling action of $\R_+^*$ on the adiabatic groupoid $G_{ad}=\gA G\times \{0\}\sqcup G\times \R_+^*$.

In \cite{DS5}, the construction of the  \enquote{gauge adiabatic groupoid} is generalized using the blowup construction for groupoids discussed above (\cf section \ref{sec:blowup}).  If $V\subset M$ is a hypersurface which is transverse to the groupoid $G$, the groupoid  $Blup_{r,s}(G,V)\to Blup(M,V)\simeq M$ is the gauge adiabatic groupoid of $G_V^V$. More generally, if $V$ is \emph{any} submanifold of $M$ which is transverse to $G$, one still constructs a Boutet de Monvel type calculus.

\begin{question}
It is natural to try to show that the closure of the algebra of bounded elements of the Boutet de Monvel algebra coincides with the one obtained in this way. We have quite well understood this and should write it precisely. Actually, the transmission property gives rise to a small difference between them.
\end{question}

\subsection{Algebroids and integrability}

Recall from definition \ref{algebroid} that an algebroid over a manifold $M$ is a smooth bundle $A$ over $M$ endowed with the following structure\begin{itemize}
\item  a Lie algebra bracket on the space of smooth sections of $A$;
\item a bundle morphism $\varrho :A\to TM$.
\end{itemize}
These are assumed to satisfy: $[X,fY]=\varrho (X)(f) Y + f[X,Y]$.

The integrability problem for algebroids is not ``trivial''. There are indeed algebroids that are not associated with groupoids (\cf \cite{AlMoli} -- se also \cite{CraFer} where necessary and sufficient conditions for integrability are given). 

On the other hand, one can define differential operators on an algebroid and even pseudodifferential operators locally using local integration, which is also possible. These operators act naturally on $C^\infty(M)$ (or $L^2(M)$ when they are bounded). When the algebroid is integrated to a Lie groupoid $G$, the $C^*$-algebra $C^*(G)$  is generated by resolvants of elliptic operators. Also, in that case, we find more representations where these operators act...

\begin{question}
How much of a groupoid $C^*$-algebra can one construct out of just its algebroid?  Is there a construction of an ($s$-simply connected) Lie groupoid $C^*$-algebra which makes sense even for non integrable algebroids?
\end{question}

\begin{remark}
Let us remark that we can also restrict this question to the case of Poisson manifolds which are particular cases of Lie algebroids.
\end{remark}

\subsection{Fourier integral operators}

Fourier integral operators (\cite{Hormandera, Hormanderb, DuisterHorm}) form a class of operators, larger than pseudodifferential operators. They are very useful in order to understand hyperbolic differential operators. 

In \cite{LescVas1}, Fourier integral operators on a Lie groupoid are defined and studied. We do not wish to say much on these. Let us just ask a question.

\begin{question}
Is there a construction of Fourier integral operators in the spirit of \cite{DS1} or \cite{vEY1} (See section \ref{sectionDNCtoPDO} above) ? 
\end{question}

\bibliography{Biblioreview.bib} 
\bibliographystyle{amsplain}

\end{document}